
\documentclass[preprint,12pt]{elsarticle}




\usepackage{amssymb}
\usepackage{amsthm}
\usepackage{amsmath}
\usepackage{booktabs}
\usepackage{hyperref}
\usepackage{verbatim}
\usepackage{mathtools}
\usepackage[utf8]{inputenc}
\usepackage[english]{babel}
\usepackage{todonotes}
\usepackage{bm}
\usepackage{fullpage}
\usepackage{algorithm}
\usepackage{algpseudocode}

\usepackage{tikz}
\usepackage{pgfplots,pgfplotstable}
\usepgfplotslibrary{colorbrewer,groupplots}
\usepackage{caption}
\usepackage{subcaption}

\usepackage{pgfplotstable}
\usepackage{tabu}
\usepackage{longtable}
\usepackage{booktabs}
\pgfplotsset{compat=1.18}

\pgfplotsset{
cycle list/Set1-5,
cycle multiindex* list={
mark list*\nextlist
Set1-5\nextlist
},
}

\pgfplotsset{
    discard if not/.style 2 args={
        x filter/.append code={
            \edef\tempa{\thisrow{#1}}
            \edef\tempb{#2}
            \ifx\tempa\tempb
            \else
                
            \fi
        }
    }
}

\newtheorem{theorem}{Theorem}
\newtheorem{lemma}[theorem]{Lemma}
\newtheorem{proposition}{Proposition}
\newtheorem{corollary}[theorem]{Corollary}
\newtheorem{definition}[theorem]{Definition}
\newtheorem{remark}[theorem]{Remark}
\newtheorem{ex}[theorem]{Example}

\newcommand{\ei}{e^{(i)}}
\newcommand{\lami}{\lambda^{(i)}}
\newcommand{\ui}{u^{(i)}}
\newcommand{\norm}[1]{\lVert #1 \rVert}
\DeclareMathOperator{\Id}{Id}


\journal{Computer and Mathematics with Applications}

\begin{document}

\begin{frontmatter}



\title{An adaptive mesh refinement strategy to ensure quasi-optimality of finite element methods for self-adjoint Helmholtz problems}


\author{Tim van Beeck$\,^\text{a}$}

\affiliation{organization={Institute for Numerical and Applied Mathematics, University of Göttingen},
            addressline={Lotzestraße 16-18},
            city={Göttingen},
            postcode={37073},
            country={Germany}}
            
\author{Umberto Zerbinati$\,^\text{b}$}

\affiliation{organization={Mathematical Institute, University of Oxford},
            addressline={Andrew Wiles Building, Radcliffe Observatory Quarter, Woodstock Road},
            city={Oxford},
            postcode={OX2 6GG},
            country={United Kingdom}}

\begin{abstract}
It is well known that the quasi-optimality of the Galerkin finite element method for the Helmholtz equation is dependent on the mesh size and the wave-number.
In the literature, different criteria have been proposed to ensure uniform quasi-optimality of the discretisation.
In the present work, we study the uniform quasi-optimality of $H^1$ conforming and non-conforming Crouzeix-Raviart discretisation of the self-adjoint Helmholtz problem.
In particular, we propose an adaptive scheme, coupled with a residual-based indicator, for generating guaranteed quasi-optimal meshes with minimal degrees of freedom.
\end{abstract}

\begin{keyword}
Helmholtz \sep T-coercivity \sep Adaptive mesh generation
\MSC 65N30 \sep 65N50 \sep 65N15
\end{keyword}
\end{frontmatter}


\section{Introduction}

In this article, we consider the self-adjoint Helmholtz equation: find $u : \Omega \to \mathbb{R}$ such that 
\begin{equation}\label{eq:helmholtz}
    - \Delta u - k^2 u = f \quad \text{in } \Omega, \quad u = 0 \quad \text{on } \Gamma_D, \quad \bm{n} \cdot \nabla u = 0 \quad \text{on } \Gamma_N,
\end{equation}
where we assume that $\Omega \subset \mathbb{R}^d$, $d = 2,3$, is a bounded Lipschitz domain with boundary $\partial \Omega = \Gamma_D \cup \Gamma_N$, where $\Gamma_D$ and $\Gamma_N$ are such that $\Gamma_D \cap \Gamma_N = \emptyset$ and $\bm{n}$ is the outward unit normal. Our interest in the Helmholtz equation arises from its wide range of applications in acoustics, electromagnetics, elastodynamics, and seismology.

In particular, we are interested in Galerkin discretisations of the self-adjoint Helmholtz equation, where we seek to approximate the variational formulation resulting from \eqref{eq:helmholtz} in a finite dimensional space $V_h$, usually consisting of piecewise polynomials of order $p$. A well-known issue in the analysis of such discretisations is that the resulting bilinear form lacks coercivity for large $k^2$.
This leads to the so-called \emph{pollution effect}, i.e.~for a fixed mesh, we lose quasi-optimality of the Galerkin method as the wave number $k^2$ increases. Ensuring that the discretisation is quasi-optimal requires, for fixed polynomial degree $p$, that the mesh size $h$ is sufficiently small. In particular, this requirement is typically more restrictive than the requirement $\frac{kh}{2\pi p} \le 1$, which is needed to accurately resolve the wave. 

We are here interested in the so-called \textit{asymptotic regime}, i.e.~where uniform quasi-optimality is guaranteed, see \cite{AKS88,BS00,BCFM24,N23,N19}.
Another area of active study is concerned with the \textit{preasymptotic} regime, where on milder assumptions on the mesh size the relative error of the Galerkin solution is controllably small, see e.g.~\cite{Wu14,DW15,GS24}.

Adpative finite element methods have shown to be extremely effective in the context of the Helmholtz equation \cite{BHP17, HS13, CFEV21, DS13, SZ15}.
For example in \cite{MS10, MS11,BCFM24} it has been proven that the hp-adaptive finite element method can achieve quasi-optimality for the Helmholtz equation under the following assumptions: 
\begin{equation}
    \label{eq:hp}
    \frac{k h}{p} \ll 1, \text{ and } p \geq C_{hp}\log(k).
\end{equation}

In the present work, we aim to study the discretisation of the self-adjoint Helmholtz equation \eqref{eq:helmholtz} with $H^1$-conforming and non-conforming Crouzeix-Raviart finite elements.
We present an analysis based on the notion T-coercivity \cite{BCZ10,Cia12,BCC14,BCC18}, which highlights the relationship between the stability of the discrete problem and a sufficiently accurate approximation of the eigenvalues of the Laplace operator.
Based on this analysis, we propose a practical algorithm that produces a mesh which guarantees the quasi-optimality of the discretisation. 
Until the approximation of the Laplace eigenvalues is sufficiently accurate, we refine the mesh either uniformly or adaptively.
In the latter case we use an adaptive error estimator for the eigenvalue problem \cite{BR78}.
Similar ideas have been discussed also in \cite{D23}, but to the best of the authors' knowledge, they have not been explored in details nor have been implemented to construct quasi-optimal meshes.
For the Crouzeix-Raviart discretisation, we leverage guaranteed lower bounds for the eigenvalue approximations as derived in  \cite{CG14,CP24}.

This paper is organized as follows: in Section \ref{sec:abstractFramework} we introduce the abstract framework, which we apply in Section \ref{sec:continuousProblem} to analyse the continuous problem. In Section \ref{sec:discreteAnalysis}, we present a unified analysis that covers both $H^1$-conforming and non-conforming Crouzeix-Raviart discretisations.
Afterward, in Section \ref{sec:gmr}, we discuss a mesh generation strategy which ensures the quasi-optimality of the discretisations. Throughout Sections \ref{sec:discreteAnalysis} and \ref{sec:gmr}, we present numerical results validating our observations. All numerical experiments presented in this manuscript are implemented in Python using the Firedrake finite element library \cite{Firedrake} and ngsPETSc \cite{ngsPETSc}.

\section{Abstract framework}
\label{sec:abstractFramework}
In this section, we provide a brief overview of the abstract framework we will use to analyze \eqref{eq:helmholtz}. In particular, we introduce the notion of T-coercivity \cite{BCZ10, Cia12} and discuss its application to study the well-posedness of variational problems and their approximation with Galerkin type methods.
In the following, let $V$ be a Hilbert space and $\mathcal{A} : V \times V \rightarrow \mathbb{R}$ be a continuous bilinear form.

\begin{definition}[T-coercivity]
    The bilinear form $\mathcal{A}$ is \textit{T-coercive} on $V$ if there exists a constant $\alpha > 0$ and a bijective operator $T : V \rightarrow V$ such that
    \begin{equation*}
        \mathcal{A} (u,Tu) \ge \alpha \Vert u \Vert^2_{V} \quad \forall u \in V.   
    \end{equation*}
\end{definition}

\noindent For $f \in L^2(\Omega)$, we are concerned with the well-posedness of the problem:
\begin{equation}\label{eq:abstractProblem}
    \text{find } u \in V \text{ such that } \mathcal{A}(u,v) = (f,v)_{L^2(\Omega)} \quad \forall v \in V. 
\end{equation}
The T-coericvity of $\mathcal{A}$ is equivalent to the well-posedness of \eqref{eq:abstractProblem} as shown in \cite[Thm.~1]{Cia12}.

In Section \ref{sec:discreteAnalysis}, we will consider both, an $H^1$-conforming and non-conforming Crouzeix-Raviart discretisations.
Thus, we consider the following setting which is general enough to cover both discretisations, but restrictive enough to avoid the more technical aspects of a fully discontinuous setting.
Let $\{ V_h \}_{h \in \mathcal{H}}$ be a sequence of finite-dimensional spaces, which do not necessarily have to be subspaces of $V$. We assume that the spaces $V_h$ are equipped with a discrete norm $\Vert \cdot \Vert_{V_h}$ which is well-defined on $V + V_h$. 
Furthermore, let $\mathcal{A}_h : V_h \times V_h \to \mathbb{R}$ be a discrete bilinear form which is continuous with respect to $\Vert \cdot \Vert_{V_h}$ on $V + V_h$ and consistent in the sense that $\mathcal{A}_h(u,v_h) = (f,v_h)_{L^2(\Omega)}$ for all $v_h \in V_h$ when $u \in V$ is the solution of \eqref{eq:abstractProblem}.

We consider the discrete problem
\begin{equation}\label{eq:abstractDiscreteProblem}
    \text{find } u_h \in V_h \text{ such that } \mathcal{A}_h(u_h,v_h) = (f,v_h)_{L^2(\Omega)}\quad \forall v_h \in V_h.
\end{equation}

\begin{definition}
    The bilinear form $\mathcal{A}_h$ is called uniformly T$_h$-coercive on $\{ V_h \}_{h \in \mathcal{H}}$, if there exists a family of bijective operators $\{ T_h \}_{h \in \mathcal{H}}$, $T_h : V_h \to V_h$, and a constant $\alpha^{\ast}> 0$ independent of $h$ such that
    \begin{equation}
        \mathcal{A}_h(u_h,T_h u_h) \ge \alpha^{\ast} \Vert u_h \Vert^2_{V_h} \quad \forall u_h \in V_h. 
    \end{equation} 
\end{definition}

It has been shown that $\mathcal{A}_h$ is uniformly T$_h$-coercive on $V_h$ if and only if the bilinear form $\mathcal{A}_h$ fulfills an uniform inf-sup condition on $V_h$ and therefore the stability of the discrete problem is equivalent to the uniform T$_h$-coercivity of $\mathcal{A}_h$: 

\begin{theorem}[Thm.~2 of \cite{Cia12}]\label{thm:Ciarlet2}
    Let $\mathcal{A}_h$ be uniformly T$_h$-coercive on $V_h$ with constant $\alpha^{\ast} > 0$. Then, the discrete problem \eqref{eq:abstractDiscreteProblem} is well-posed and stable. Further, it holds that 
    \begin{equation}
        \Vert u - u_h \Vert_{V_h} \le C_{\text{qo}} \inf_{v_h \in V_h} \Vert u - v_h \Vert_{V_h},
    \end{equation}
    where $C_{\text{qo}} := 1+ (\Vert \mathcal{A}_h \Vert_{\mathcal{L}(V_h,V_h)} \Vert T_h \Vert_{\mathcal{L}(V_h,V_h)})/ \alpha^{\ast} > 0$. 
\end{theorem}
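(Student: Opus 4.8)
The plan is to run the classical Céa-type argument with coercivity replaced throughout by T$_h$-coercivity, the operator $T_h$ playing the role of a test-function "rotation" that restores a definite sign. The proof splits into two parts: first well-posedness and stability of \eqref{eq:abstractDiscreteProblem}, then the quasi-optimality bound. The structural fact I would extract at the outset is that T$_h$-coercivity implies a discrete inf-sup condition: for any $u_h \in V_h \setminus \{0\}$, testing against $v_h = T_h u_h$ and bounding the denominator by $\|T_h u_h\|_{V_h} \le \|T_h\|_{\mathcal{L}(V_h,V_h)} \|u_h\|_{V_h}$ gives
\begin{equation*}
\sup_{v_h \in V_h \setminus \{0\}} \frac{\mathcal{A}_h(u_h, v_h)}{\|v_h\|_{V_h}} \ge \frac{\mathcal{A}_h(u_h, T_h u_h)}{\|T_h u_h\|_{V_h}} \ge \frac{\alpha^{\ast}}{\|T_h\|_{\mathcal{L}(V_h,V_h)}} \|u_h\|_{V_h}.
\end{equation*}
Since $V_h$ is finite-dimensional, the operator associated with $\mathcal{A}_h$ is a square matrix; the inf-sup bound above shows it is injective, hence bijective, so \eqref{eq:abstractDiscreteProblem} has a unique solution and the same bound controls $\|u_h\|_{V_h}$ by the data.

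For the quasi-optimality estimate, the first ingredient is Galerkin orthogonality. Because $u \in V$ solves \eqref{eq:abstractProblem}, consistency gives $\mathcal{A}_h(u, v_h) = (f,v_h)_{L^2(\Omega)}$ for all $v_h \in V_h$, which combined with \eqref{eq:abstractDiscreteProblem} yields $\mathcal{A}_h(u - u_h, v_h) = 0$ for all $v_h \in V_h$. Now I would fix an arbitrary $v_h \in V_h$, set $w_h := u_h - v_h \in V_h$, apply T$_h$-coercivity to $w_h$, and insert the decomposition $w_h = (u - v_h) - (u - u_h)$ into the first argument. Testing Galerkin orthogonality with $T_h w_h \in V_h$ annihilates the term $\mathcal{A}_h(u - u_h, T_h w_h)$, leaving
\begin{equation*}
\alpha^{\ast} \|w_h\|_{V_h}^2 \le \mathcal{A}_h(w_h, T_h w_h) = \mathcal{A}_h(u - v_h, T_h w_h) \le \|\mathcal{A}_h\|_{\mathcal{L}(V_h,V_h)} \|T_h\|_{\mathcal{L}(V_h,V_h)} \|u - v_h\|_{V_h} \|w_h\|_{V_h}.
\end{equation*}
Dividing by $\|w_h\|_{V_h}$, combining with the triangle inequality $\|u - u_h\|_{V_h} \le \|u - v_h\|_{V_h} + \|w_h\|_{V_h}$, and taking the infimum over $v_h \in V_h$ produces exactly the constant $C_{\text{qo}}$ in the statement.

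The step requiring the most care is the continuity estimate in the last display. The first argument $u - v_h$ lives in $V + V_h$ rather than in $V_h$, so the bound cannot use continuity of $\mathcal{A}_h$ on $V_h \times V_h$ alone; it must invoke precisely the hypothesis that $\mathcal{A}_h$ is continuous with respect to $\|\cdot\|_{V_h}$ on $V + V_h$. This is the only place where the nonconforming generality genuinely bites, and it is the reason the abstract framework was set up so that both $\|\cdot\|_{V_h}$ and $\mathcal{A}_h$ are meaningful on the sum $V + V_h$; the symbol $\|\mathcal{A}_h\|_{\mathcal{L}(V_h,V_h)}$ should accordingly be read as the continuity constant valid on that extended domain. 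Everything else is routine bookkeeping, and in the conforming case $V_h \subset V$ with $\mathcal{A}_h = \mathcal{A}|_{V_h}$ the subtlety disappears entirely.
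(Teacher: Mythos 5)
Your proof is correct and follows essentially the same route as the source: the paper itself does not prove this theorem but imports it as Thm.~2 of \cite{Cia12}, and the argument there is precisely the one you give — T$_h$-coercivity yields a discrete inf-sup condition (hence well-posedness and stability in finite dimensions), and Galerkin orthogonality from the consistency hypothesis combined with T$_h$-coercivity applied to $w_h = u_h - v_h$ gives the C\'ea-type bound with exactly the stated constant $C_{\text{qo}}$. Your observation that the continuity constant must be understood on $(V+V_h)\times V_h$ rather than on $V_h\times V_h$ is the correct reading of the notation $\Vert \mathcal{A}_h \Vert_{\mathcal{L}(V_h,V_h)}$ in the nonconforming setting, and it is indeed the only point where the paper's abstract hypotheses (norm and bilinear form well-defined on $V+V_h$) are genuinely needed.
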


We note that even in the conforming setting, the uniform T$_h$-coercivity of $\mathcal{A}_h$ has to be shown on $V_h$, because unlike coercivity, T-coercivity is not inherited by the discrete problem.
In the conforming setting, it would be sufficient to show that
\begin{equation}
    \label{eq:Tcoercive:ThtoT}
    \lim_{h \rightarrow 0} \Vert T_h - T \Vert_{\mathcal{L}(V,V)} = 0 
\end{equation}
to ensure the uniform T$_h$-coercivity of $\mathcal{A}_h$, cf.~\cite[Cor. 1]{Cia12}. Similar criteria can also be developed in a more general setting, applicable  also to non-conforming discretisations, cf.~\cite{Halla21Tcomp,HLS22} for T-compatibility and \cite{H23,vB23} for its applications to discontinuous Galerkin methods.

In the following, we will avoid the use of these criteria in our analysis to be more explicit on the assumptions on the mesh size $h$ required to ensure the quasi-optimality of the Galerkin method.

\section{Well-posedness of the continuous problem}\label{sec:continuousProblem}
\noindent We want to show the well-posedness of the continuous weak formulation associated with \eqref{eq:helmholtz}. To this end, we define the Hilbert space 
\begin{equation}
    V := \{ u \in H^1(\Omega) : \gamma_0 u = 0 \text{ on } \Gamma_D \},
\end{equation}
where $\gamma_0 : H^1(\Omega) \to L^2(\partial \Omega)$ is the trace operator. In the following, let $a(u,v) := (\nabla u, \nabla v)_{L^2(\Omega)}$, $u,v \in V$, be the bilinear form associated with the weak Laplacian $- \Delta$ on $V$. The weak formulation of problem \eqref{eq:helmholtz} reads as: find $u \in V$ such that
\begin{equation}\label{eq:weakForm}
    \mathcal{A}(u,v) := a(u,v) - k^2 (u,v)_{L^2(\Omega)} = (f,v)_{L^2(\Omega)} \quad \quad \forall v \in V. 
\end{equation}
Let $(\lami, \ei)_{i \in \mathbb{N}}$ be the eigenpairs of the Laplacian with homogeneous Dirichlet boundary conditions, i.e.~the solutions to the problem: find $\lami \in \mathbb{R}$ and $\ei \in V \cap H^1_0(\Omega)$ such that
\begin{equation}
    \label{eq:EVP:cont}
    a(\ei,v) = \lami (\ei,v)_{L^2(\Omega)} \quad \forall v \in V \cap H^1_0(\Omega).
\end{equation}
Let $S : L^2(\Omega) \to L^2(\Omega)$ be the solution operator associated to \eqref{eq:EVP:cont}. Due to the Poincar\'e-inequality, the bilinear form $a(\cdot,\cdot)$ is coercive on $V \cap H^1_0(\Omega)$ and thus the well-posedness of problem \eqref{eq:EVP:cont} is a consequence of the Lax-Milgram lemma.
Hence, the solution operator $S$ is well-defined and due to the compactness of the embedding $H^1(\Omega) \hookrightarrow L^2(\Omega)$ the operator $S$ is also compact.
Additionally, $S$ is self-adjoint and therefore the eigenfunctions $\{ \ei \}_{i \in \mathbb{N}}$ form a Hilbert basis of $L^2(\Omega)$ \cite{B10,Acta10}.

Since $V \subset L^2(\Omega)$, we can represent any function $u \in V$ as $u = \sum_{i \in \mathbb{N}} \ui \ei$ with coefficients $\ui \in \mathbb{R}$.
Without loss of generality, we assume that the eigenfunctions $\{\ei\}_{i \in \mathbb{N}}$ are normalized with respect to the $H^1$-norm and that the eigenvalues $\{\lami\}_{i\in \mathbb{N}}$ are ordered by increasing value and counted with their algebraic multiplicity.

From now on, we assume that the wave-number $k$ is such that $k^2 > \lambda^{(1)}$, otherwise the problem is coercive and the well-posedness of \eqref{eq:weakForm} follows directly from the Lax-Milgram lemma.
Then, we denote by $i_\ast \coloneqq \max \{ i \in \mathbb{N} : \lami < k^2 \}$ the largest index such that $\lambda^{(i_{\ast})}$ is smaller than $k^2$.

We define the subspace $W \subset V$ and the map $T : V \rightarrow V$ as follows:
\begin{equation}
    W \coloneqq \text{span}_{0 \le i \le i_\ast} \{ \ei \} \subset V, \qquad T \coloneqq \Id_V - 2 P_W,
\end{equation}
where $P_W \in \mathcal{L}(V,W)$ is the orthogonal projection onto $W$. Note that $T$ is self-inverse and therefore bijective and acts on the eigenfunctions as
\begin{equation}\label{eq:T}
    T \ei := \begin{cases}
        - \ei & \text{if } 0 \le i \le i_\ast, \\
        + \ei & \text{if } i > i_\ast.
    \end{cases}
\end{equation}

\begin{lemma}
    \label{lem:aD:Tcoercive}
    Assume that $k^2 \not \in \{ \lami \}_{i \in \mathbb{N}}$. Then, the bilinearform $\mathcal{A}$ defined in \eqref{eq:weakForm} is T-coercive on $V$.  
\end{lemma}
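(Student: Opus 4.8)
The plan is to verify the defining inequality of T-coercivity directly, by diagonalising both $a(\cdot,\cdot)$ and $(\cdot,\cdot)_{L^2}$ in the Hilbert basis $\{\ei\}_{i\in\mathbb{N}}$ and exploiting the sign pattern built into $T$ in \eqref{eq:T}. Bijectivity of $T$ has already been noted (it is self-inverse), so only the coercivity estimate remains. First I would expand an arbitrary $u\in V$ as $u=\sum_i \ui \ei$ and record that the eigenfunctions are mutually orthogonal for both the $L^2$-inner product and $a(\cdot,\cdot)$, since $a(\ei,e^{(j)})=\lami(\ei,e^{(j)})_{L^2}$. The $H^1$-normalisation $\norm{\ei}_V=1$ together with $a(\ei,\ei)=\lami\norm{\ei}_{L^2}^2$ gives $\norm{\ei}_{L^2}^2=1/(1+\lami)$ and, in particular, $\norm{u}_V^2=\sum_i(\ui)^2$.

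Next I would compute $\mathcal{A}(u,Tu)$ term by term. Writing $Tu=\sum_i\varepsilon_i\ui\ei$ with $\varepsilon_i=-1$ for $0\le i\le i_\ast$ and $\varepsilon_i=+1$ for $i>i_\ast$, orthogonality collapses the double sum to the diagonal and yields
\begin{equation*}
    \mathcal{A}(u,Tu)=\sum_{i\in\mathbb{N}}\varepsilon_i\,(\lami-k^2)\,(\ui)^2\,\norm{\ei}_{L^2}^2.
\end{equation*}
The decisive observation is that $T$ is tailored precisely so that $\varepsilon_i(\lami-k^2)=|\lami-k^2|$: for $0\le i\le i_\ast$ one has $\lami<k^2$ and $\varepsilon_i=-1$, whereas for $i>i_\ast$ one has $\lami>k^2$ (here $k^2\notin\{\lami\}$ rules out equality) and $\varepsilon_i=+1$. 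Hence every summand is nonnegative and, inserting $\norm{\ei}_{L^2}^2=1/(1+\lami)$,
\begin{equation*}
    \mathcal{A}(u,Tu)=\sum_{i\in\mathbb{N}}\frac{|\lami-k^2|}{1+\lami}\,(\ui)^2\ge\alpha\sum_{i\in\mathbb{N}}(\ui)^2=\alpha\,\norm{u}_V^2,\qquad \alpha:=\inf_{i\in\mathbb{N}}\frac{|\lami-k^2|}{1+\lami}.
\end{equation*}

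The crux of the argument — and the only genuinely nontrivial step — is to show $\alpha>0$, which is exactly where the hypothesis $k^2\notin\{\lami\}_{i\in\mathbb{N}}$ is needed. Each ratio $|\lami-k^2|/(1+\lami)$ is strictly positive because $\lami\ne k^2$ for every $i$. Since $\lami\to\infty$ as $i\to\infty$, the ratios tend to $1$, so only finitely many of them can lie below, say, $\tfrac12$; the infimum is therefore the minimum of a finite set of strictly positive numbers and a tail bounded below by $\tfrac12$, whence $\alpha>0$. This produces the required constant and completes the verification that $\mathcal{A}$ is T-coercive on $V$.
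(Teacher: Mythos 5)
Your proof is correct and takes essentially the same route as the paper's (which reproduces Ciarlet's Prop.~1): expand $u$ in the Laplace eigenbasis, use the sign flip built into $T$ so that every diagonal term becomes $\bigl(|\lami-k^2|/(1+\lami)\bigr)(\ui)^2$, and bound below by the worst ratio $\alpha$. If anything, you are more careful than the paper on the one genuinely delicate point — justifying that $\alpha=\inf_i |\lami-k^2|/(1+\lami)$ is strictly positive via $\lami\to\infty$ — which the paper simply asserts by writing it as a minimum.
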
 
To showcase the mechanism behind the T-coercivity of Helmholtz-like problems, we report the proof of \cite[Prop. 1]{Cia12}.
\begin{proof}
    By definition, $T$ swaps the sign of the eigenfunctions corresponding to eigenvalues smaller than $k^2$. Therefore, we have that 
    \begin{align*}
        \mathcal{A}(u,Tu) &:= \sum_{0 \le i \le i_\ast} \left( \frac{k^2 - \lami}{1 + \lami} \right) (\ui)^2 + \sum_{i > i_\ast} \left(\frac{\lami - k^2}{1 + \lami} \right) (\ui)^2 \\
        &\ge \alpha \sum_{i \in \mathbb{N}} \lami (\ui)^2 \ge \alpha \Vert u \Vert^2_{H^1(\Omega)} \quad \forall u \in V,
    \end{align*}
    where $\alpha := \min_{i \ge 0} \left\{ \left\vert \frac{\lami - k^2}{1 + \lami} \right\vert \right\} > 0$. 
\end{proof}

As previously discussed, the T-coercivity of the bilinear form $\mathcal{A}$ immediately yields the well-posedness of \eqref{eq:weakForm} for $k^2 \not \in \{ \lami \}_{i \in \mathbb{N}}$. 

\section{Discrete analysis}\label{sec:discreteAnalysis}
In this section, we consider the finite element approximation of the problem \eqref{eq:weakForm}.
The main goal is to show that the resulting discretisations are uniformly T$_h$-coercive which allows us to apply Thm.~\ref{thm:Ciarlet2} to conclude the stability of the discrete problems. 

In the following, we assume that $\Omega$ is a polygonal domain and that $\{ \mathcal{T}_h \}_{h \in \mathcal{H}}$ is a sequence of shape regular triangulations of $\Omega$ with mesh size $h$.
Let $\{ V_h \}_{h \in \mathcal{H}} \subset L^2(\Omega)$ be a sequence of finite dimensional spaces, which do not necessarily have to be subspaces of $V$.
Then, the discretisation of \eqref{eq:weakForm} reads as: find $u_h \in V_h$ such that
\begin{equation}\label{eq:discreteProblem}
    \mathcal{A}_h(u_h,v_h) := a_h(u_h,v_h) - k^2 (u_h,v_h)_{L^2(\Omega)} = (f,v_h)_{L^2(\Omega)} \quad \forall v_h \in V_h,
\end{equation}
where $a_h(u_h,v_h) := \sum_{K \in \mathcal{T}_h} (\nabla u_h, \nabla v_h)_{K}$ is the bilinear form associated with the weak Laplacian on the discrete space $V_h$.

We will denote $V_{h,0}$ the subspace of $V_h$ incorporating homogeneous Dirichlet boundary conditions and define  $n_h := \dim V_{h,0}$. Let $(\lambda_h^{(i)},e_h^{(i)})_{0 \le i \le n_h}$ be the solutions of the discrete eigenvalue problem: find $\lami_h \in \mathbb{R}$ and $\ei_h \in V_{h,0}$ such that 
\begin{equation}
    \label{eq:discreteEVP}
    a_h(e_h,v_h) = \lambda_h (e_h,v_h)_{L^2(\Omega)} \qquad \forall v_h \in V_{h,0}.
\end{equation}
In the following, we assume that the discrete eigenvalue problem is well-posed and that the sequence of associated solution operators $\{ S_h \}_{h \in \mathcal{H}}$, $S_h : L^2(\Omega) \to L^2(\Omega)$, converges uniformly in operator norm to the solution operator $S$ of the continuous eigenvalue problem \eqref{eq:EVP:cont}.
This assumption guarantees that the sequence of discrete eigenpairs $(\lami_h, \ei_h)_{0 \le i \le n_h}$, which we assume to be ordered by increasing value of the eigenvalues, approximates the continuous eigenpairs $(\lami,\ei)_{i \in \mathbb{N}}$ as $h \to 0$.
In particular, this rules out the existence of \emph{spurious modes}, which might pollute the discrete spectrum \cite{Acta10}.

As in the continuous setting, we set 
\begin{equation}
    W_h := \text{span}_{0 \le i \le i_{\ast}} \{ \ei_h \} \subset V_h,
\end{equation}
and define a bijective operator $T_h : V_h \to V_h$ through $T_h := \Id_{V_h} - 2P_{W_h}$, where $P_{W_h} \in \mathcal{L}(V_h,W_h)$ is the orthogonal projection onto $W_h$. Note that $T_h$ acts on eigenfunctions as
\begin{equation}
    T_h(\ei_h) = \begin{cases}
        - \ei_h & \text{if } 0 \le i \le i_{\ast}, \\
        + \ei_h & \text{if } i > i_{\ast}.
    \end{cases}
\end{equation}

The following result shows that the bilinear form $\mathcal{A}_h(\cdot,\cdot)$ is uniformly T$_h$-coercive and therefore stable by Thm.~\ref{thm:Ciarlet2} provided that the mesh size $h$ is small enough. 
We will shortly discuss in greater detail how small $h$ has to be.

\begin{theorem}\label{thm:mainresult}
    Under the previous assumptions on the discrete eigenvalue problem \eqref{eq:discreteEVP}, the bilinear form $\mathcal{A}_h(\cdot,\cdot)$ is uniformly $T_h$-coercive if $h$ is such that $\lambda_h^{(i_{\ast})} < k^2 < \lambda_h^{(i_{\ast} + 1)}$.
\end{theorem}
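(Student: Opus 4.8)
The plan is to mirror the proof of Lemma~\ref{lem:aD:Tcoercive} at the discrete level, replacing the continuous eigenpairs by the discrete ones, and then to argue that the resulting coercivity constant does not degenerate as $h \to 0$. First I would record that, since \eqref{eq:discreteEVP} is a symmetric generalised eigenvalue problem, the eigenfunctions $\{e_h^{(i)}\}_{0 \le i \le n_h}$ may be chosen simultaneously orthogonal with respect to $(\cdot,\cdot)_{L^2(\Omega)}$ and $a_h(\cdot,\cdot)$, hence also with respect to $\Vert \cdot \Vert_{V_h}^2 = \Vert \cdot \Vert_{L^2(\Omega)}^2 + a_h(\cdot,\cdot)$. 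Normalising them in the discrete norm yields an orthonormal basis with $\Vert e_h^{(i)} \Vert_{L^2(\Omega)}^2 = (1 + \lambda_h^{(i)})^{-1}$, exactly as in the continuous case.

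Expanding $u_h = \sum_{0 \le i \le n_h} u_h^{(i)} e_h^{(i)}$ and using that $T_h$ flips the sign of precisely the modes with $0 \le i \le i_\ast$, the same diagonal computation as in Lemma~\ref{lem:aD:Tcoercive} gives
\begin{equation*}
    \mathcal{A}_h(u_h, T_h u_h) = \sum_{0 \le i \le i_\ast} \frac{k^2 - \lambda_h^{(i)}}{1 + \lambda_h^{(i)}} (u_h^{(i)})^2 + \sum_{i > i_\ast} \frac{\lambda_h^{(i)} - k^2}{1 + \lambda_h^{(i)}} (u_h^{(i)})^2 .
\end{equation*}
Under the hypothesis $\lambda_h^{(i_\ast)} < k^2 < \lambda_h^{(i_\ast+1)}$ every coefficient is strictly positive, so setting $\alpha_h^{\ast} := \min_{0 \le i \le n_h} \left| \frac{\lambda_h^{(i)} - k^2}{1 + \lambda_h^{(i)}} \right|$ immediately produces the bound $\mathcal{A}_h(u_h, T_h u_h) \ge \alpha_h^{\ast} \Vert u_h \Vert_{V_h}^2$. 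This is $T_h$-coercivity for each fixed $h$; the substance of the theorem is that it holds \emph{uniformly}, i.e.\ that $\alpha^{\ast} := \inf_{h \in \mathcal{H}} \alpha_h^{\ast} > 0$, since only then does Thm.~\ref{thm:Ciarlet2} deliver an $h$-independent quasi-optimality constant $C_{\text{qo}}$.

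To establish the uniform lower bound I would split the minimum into three regimes. For the high modes the coefficient $(\lambda_h^{(i)} - k^2)/(1 + \lambda_h^{(i)})$ tends to $1$ as $\lambda_h^{(i)} \to \infty$; concretely it is bounded below by $\tfrac12$ as soon as $\lambda_h^{(i)} \ge 1 + 2k^2$, a bound that is harmless and uniform in $h$. There remain only finitely many discrete eigenvalues below the fixed threshold $1 + 2k^2$, and these approximate the corresponding continuous eigenvalues. The genuinely delicate indices are $i = i_\ast$ and $i = i_\ast + 1$, whose coefficients quantify the discrete spectral gap $k^2 - \lambda_h^{(i_\ast)}$ and $\lambda_h^{(i_\ast+1)} - k^2$ straddling $k^2$.

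The main obstacle is therefore to keep this gap from closing as $h \to 0$, and this is where I would invoke the standing assumption that $S_h \to S$ uniformly in operator norm together with the absence of spurious modes, which guarantees $\lambda_h^{(i)} \to \lambda^{(i)}$ for each fixed $i$. Since $k^2 \notin \{\lambda^{(i)}\}_{i \in \mathbb{N}}$, the continuous gaps $k^2 - \lambda^{(i_\ast)}$ and $\lambda^{(i_\ast+1)} - k^2$ are strictly positive, so for $h$ small enough the discrete gaps are bounded below by, say, half their continuous values; likewise the coefficients of the remaining low modes converge to their strictly positive continuous counterparts. Combining the three regimes shows that $\alpha_h^{\ast}$ converges to the continuous constant $\alpha$ of Lemma~\ref{lem:aD:Tcoercive} while staying bounded below along the tail, whence $\alpha^{\ast} > 0$. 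I expect the only real work to be bookkeeping: making the three estimates simultaneous and quantitative enough to exhibit exactly how small $h$ must be, which feeds directly into the discussion of the admissible mesh size following the theorem.
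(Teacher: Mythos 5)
Your proposal is correct, and its core --- expanding $u_h$ in the discrete eigenbasis, using that $T_h$ flips the sign of the modes up to $i_{\ast}$, and reading off the diagonal quadratic form whose coefficients are all positive exactly under $\lambda_h^{(i_{\ast})} < k^2 < \lambda_h^{(i_{\ast}+1)}$ --- is precisely the paper's proof; your normalization $\Vert e_h^{(i)}\Vert_{L^2(\Omega)}^2 = (1+\lambda_h^{(i)})^{-1}$ is just the bookkeeping the paper leaves unstated behind its displayed identity. The genuine difference is the final step. The paper stops at the constant $\alpha^{\ast} := \min_{i \ge 0}\bigl\{\bigl\vert (\lambda_h^{(i)}-k^2)/(1+\lambda_h^{(i)})\bigr\vert\bigr\}$ and declares the form uniformly $T_h$-coercive, even though this quantity formally still depends on $h$, whereas the definition of uniform $T_h$-coercivity demands an $h$-independent constant; that last step is left implicit. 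You supply it explicitly via the three-regime argument: tail coefficients are at least $1/2$ once $\lambda_h^{(i)} \ge 1+2k^2$ (uniformly in $h$), the finitely many remaining low modes have coefficients converging to their strictly positive continuous counterparts because the assumed norm convergence $S_h \to S$ excludes spurious modes, and the two gap indices $i_{\ast}$, $i_{\ast}+1$ stay bounded below by half the continuous gaps for $h$ small. This buys a genuinely $h$-independent $\alpha^{\ast}$, and hence an $h$-independent quasi-optimality constant $C_{\text{qo}}$ in Thm.~\ref{thm:Ciarlet2}, at the price of an additional smallness threshold on $h$ beyond the bare gap condition; read literally, the paper's version applies to any $h$ satisfying the gap condition but only exhibits an $h$-dependent constant. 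Your extra limiting argument is thus not a different route but a completion of the same one, and it is exactly the quantitative information that the subsequent corollary and the mesh-refinement strategy of Section~\ref{sec:gmr} rely on.
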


\begin{proof}
    Let $u_h \in V_h$. Using the discrete eigenbasis, we expand $u_h$ as $u_h = \sum_{0 \le i \le n_h} \ui_h \ei_h$.
    In analogy with the continuous case, we can evaluate $\mathcal{A}_h(u_h,T_h u_h)$ as
    \begin{equation}
        \mathcal{A}_h( u_h, T_h u_h) = \sum_{0 \le i \le i_{\ast}} \left( \frac{k^2 - \lami_h}{1 + \lami_h} \right) (\ui_h)^2 + \sum_{i_{\ast} < i \le n_h} \left( \frac{\lami_h - k^2}{1 + \lami_h} \right) (\ui_h)^2.
    \end{equation}
    Provided that $\lambda_h^{(i_{\ast})} < k^2 < \lambda_h^{(i_{\ast} + 1)}$, all terms are positive and $\mathcal{A}_h$ is uniformly $T_h$-coercive with constant $\alpha^\ast := \min_{i \ge 0} \left\{ \left\vert \frac{\lami_h - k^2}{1 + \lami_h} \right\vert \right\}$, see also Fig.~\ref{fig:Thcoercive}.
\end{proof}

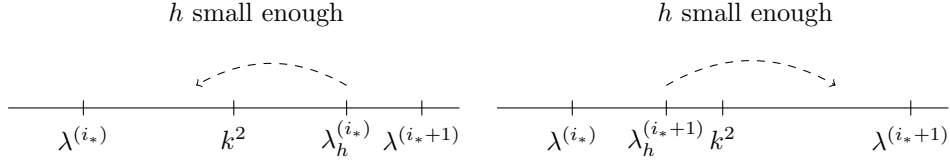
\begin{figure}
    \centering
    \begin{tikzpicture}
        \begin{scope}
            \draw (0,0) -- (6,0);
            \draw (1,-0.1) -- (1,0.1);
            \node at (1,-0.4) {\footnotesize $\lambda^{(i_{\ast})}$};
            \draw (3,-0.1) -- (3,0.1);
            \node at (3,-0.4) {\footnotesize $k^2$};
            \draw (4.5,-0.1) -- (4.5,0.1);
            \node at (4.5,-0.4) {\footnotesize $\lambda^{(i_{\ast})}_h$};
            \draw (5.5,-0.1) -- (5.5,0.1);
            \node at (5.5,-0.4) {\footnotesize $\lambda^{(i_{\ast}+1)}$};
            \draw[->,dashed] (4.5,0.3) to [out=150,in=30] (2.5,0.3);
            \node at (3.3,1.25) {\footnotesize $h$ small enough};
        \end{scope}
        \begin{scope}[xshift=6.5cm]
            \draw (0,0) -- (6,0);
            \draw (1,-0.1) -- (1,0.1);
            \node at (1,-0.4) {\footnotesize $\lambda^{(i_{\ast})}$};
            \draw (3,-0.1) -- (3,0.1);
            \node at (3,-0.4) {\footnotesize $k^2$};
            \draw (5.5,-0.1) -- (5.5,0.1);
            \node at (5.5,-0.4) {\footnotesize $\lambda^{(i_{\ast}+1)}$};
            \node at (3.3,1.25) {\footnotesize $h$ small enough};
            \draw[] (2.25,-0.1) -- (2.25,0.1); 
            \node at (2.25,-0.4) { \footnotesize $\lambda^{(i_{\ast}+1)}_h$};
            \draw[->,dashed] (2.25,0.3) to [out=30,in=150] (4.5,0.3);
        \end{scope}
    \end{tikzpicture}
    \caption{We assume the discrete eigenvalues to be ordered but, depending on the discretisation, it might be the case that $\lambda_h^{(i_{\ast})} > k^2$ (left) or $\lambda^{(i_{\ast}+1)} < k^2$ (right). We achieve T$_h$-coercivity by choosing $h$ small enough such that $\lambda^{(i_{\ast})}_h < k^2 $ and $\lambda^{(i_{\ast}+1)}_h > k^2$, respectively.}
    \label{fig:Thcoercive}
\end{figure}

Once we have shown that the discrete bilinear form $\mathcal{A}_h(\cdot,\cdot)$ is uniformly $T_h$-coercive on $V_h$, we can use classical arguments to prove that the approximation error is bounded by the best approximation error in $V_h$, cf.~Thm.~\ref{thm:Ciarlet2}.
It can be shown, with further arguments, that the constants are independent of the wave number $k^2$ \cite{MS10,MS11,CF24}.

\subsection{$H^1$-conforming finite element discretisation}
For $p \ge 1$ we define $V_h \subset V$ to be the $H^1$-conforming finite element space of order $p$, i.e. 
\begin{equation*}
    V_h := \{ v \in H^1(\Omega) : v \vert_T \in \mathcal{P}^p(T) \quad \forall T \in \mathcal{T}_h \} \cap V.  
\end{equation*}
We have that $a_h(\cdot,\cdot) = a(\cdot,\cdot)$ and therefore $\mathcal{A}_h(\cdot,\cdot) = \mathcal{A}(\cdot,\cdot)$ on $V_h$. Since $V_h \subset V$ is a conforming subspace, the coercivity of $a(\cdot,\cdot)$ is inherited uniformly to the discrete level, hence problem \eqref{eq:discreteEVP} is well-posed.
Furthermore, it is well-known that the associated sequence of discrete solution operators converges to the solution operator of the continuous problem \cite{Acta10}. Thus, the assumptions on the discrete eigenvalue problem \eqref{eq:discreteEVP} are satisfied and Thm.~\ref{thm:mainresult} applies.

The following result shows that we can leverage eigenvalue estimates to derive an upper bound on the mesh size $h$ such that the condition $\lambda_h^{(i_{\ast})} < k^2$ is satisfied.

\begin{corollary}
    Assume that $h^{2q} < C \left( \frac{k^2 - \lambda^{(i_\ast)}}{4 \sqrt{i_{\ast}} \lambda^{(i_\ast)}} \right)$, where $q \coloneqq \min \{p,s-1\}$ with $s$ being the Sobolev index granted by elliptic regularity for an $H^1(\Omega)$-data. Then $\lambda_h^{(i_{\ast})}  < k^2 < \lambda_h^{(i_{\ast}+1)}$. 
\end{corollary}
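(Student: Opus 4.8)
The plan is to treat the two inequalities $\lambda_h^{(i_\ast)} < k^2$ and $k^2 < \lambda_h^{(i_\ast+1)}$ separately, exploiting that in the $H^1$-conforming setting the discrete eigenvalues converge monotonically from above. Since $V_{h,0} \subset V \cap H^1_0(\Omega)$ is a conforming subspace, the Rayleigh--Ritz / min--max characterisation of the eigenvalues immediately gives $\lambda^{(i)} \le \lambda_h^{(i)}$ for every admissible index. In particular $\lambda_h^{(i_\ast+1)} \ge \lambda^{(i_\ast+1)} > k^2$ by the definition of $i_\ast$, so the upper bound $k^2 < \lambda_h^{(i_\ast+1)}$ holds for \emph{any} mesh and needs no smallness assumption. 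This is exactly the asymmetry depicted in Fig.~\ref{fig:Thcoercive}: in the conforming case only the lower eigenvalue can overshoot past $k^2$. It therefore remains to guarantee $\lambda_h^{(i_\ast)} < k^2$, which by monotonicity from above is equivalent to controlling the error $\lambda_h^{(i_\ast)} - \lambda^{(i_\ast)} < k^2 - \lambda^{(i_\ast)}$.

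First I would convert this into an operator-norm statement using the standing assumption that $S_h \to S$ in $\mathcal{L}(L^2,L^2)$. Since $S, S_h$ are compact, self-adjoint and positive with eigenvalues $1/\lambda^{(i)}$ and $1/\lambda_h^{(i)}$, Weyl's monotonicity for the ordered eigenvalues of self-adjoint operators gives, for each index $i \le i_\ast$,
\begin{equation*}
    \left| \frac{1}{\lambda^{(i)}} - \frac{1}{\lambda_h^{(i)}} \right| \le \left\| (S - S_h) \big|_{W} \right\|,
\end{equation*}
where it suffices to use the restriction to the $i_\ast$-dimensional space $W = \mathrm{span}_{0 \le i \le i_\ast}\{e^{(i)}\}$. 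Multiplying by $\lambda^{(i)}\lambda_h^{(i)}$ turns this into a bound on $|\lambda^{(i)} - \lambda_h^{(i)}|$.

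Next I would estimate $\|(S-S_h)|_W\|$ by the a priori convergence rate. For data $e^{(i)}$ the exact solution is $S e^{(i)} = e^{(i)}/\lambda^{(i)}$, so the Aubin--Nitsche duality argument together with the elliptic-regularity index $s$ and the interpolation estimate of order $q = \min\{p,s-1\}$ yields $\| (S - S_h)e^{(i)}\|_{L^2} \lesssim h^{2q}\,|e^{(i)}|_{H^{q+1}}/\lambda^{(i)}$. Bounding the norm of the restricted operator on $W$ by the square root of the sum of the squared column errors (a Hilbert--Schmidt / Frobenius estimate) produces the dimensional factor $\sqrt{i_\ast}$, while the interplay between the $1/\lambda^{(i)}$ scaling of $Se^{(i)}$ and the factor $\lambda^{(i)}\lambda_h^{(i)}$ from the perturbation bound is what collapses the powers of the eigenvalue down to a single $\lambda^{(i_\ast)}$. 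Collecting the constants from interpolation, duality and the eigenfunction seminorms into one constant $C$, one arrives at
\begin{equation*}
    \lambda_h^{(i_\ast)} - \lambda^{(i_\ast)} \le \frac{4\sqrt{i_\ast}\,\lambda^{(i_\ast)}}{C}\, h^{2q},
\end{equation*}
so that the hypothesis $h^{2q} < C (k^2 - \lambda^{(i_\ast)})/(4\sqrt{i_\ast}\lambda^{(i_\ast)})$ forces the right-hand side strictly below $k^2 - \lambda^{(i_\ast)}$, hence $\lambda_h^{(i_\ast)} < k^2$.

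I expect the main obstacle to be the bookkeeping of the constant rather than any conceptual difficulty. One must make sure the perturbation bound is applied to the correctly ordered eigenvalue $\lambda_h^{(i_\ast)}$, which is legitimate precisely because min--max gives an index-by-index comparison even across clusters or multiplicities, and one must track how the $H^1$-normalisation of the eigenfunctions and the elliptic-regularity seminorms feed into $C$ so that the stated \emph{single} power of $\lambda^{(i_\ast)}$ is reproduced: a crude perturbation estimate would instead leave a higher power, so the index-matched duality scaling is essential. A secondary point worth flagging is that the monotone-from-above argument making the upper bound free is special to the conforming case; in the Crouzeix--Raviart setting the discrete eigenvalues may lie below the continuous ones, so the analogous statement there additionally requires a smallness condition ensuring $\lambda_h^{(i_\ast+1)} > k^2$.
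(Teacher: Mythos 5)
Your opening reduction coincides exactly with the paper's: by conformity and the min--max principle $\lambda^{(i)} \le \lambda_h^{(i)}$ for every index, so $k^2 < \lambda^{(i_\ast+1)} \le \lambda_h^{(i_\ast+1)}$ holds on any mesh, and the corollary reduces to $\lambda_h^{(i_\ast)} - \lambda^{(i_\ast)} < k^2 - \lambda^{(i_\ast)}$. From there, however, the paper's proof is essentially a citation: \cite[Thm.~47.10]{EG_FE2} directly supplies
\begin{equation*}
    \lambda_h^{(i_\ast)} - \lambda^{(i_\ast)} \le \lambda^{(i_\ast)}\, 4\sqrt{i_\ast}\, C \max_{v \in S_{i_\ast}} \min_{v_h \in V_h} \Vert v - v_h \Vert^2_V \le C \lambda^{(i_\ast)} 4 \sqrt{i_\ast}\, h^{2q},
\end{equation*}
and the hypothesis on $h^{2q}$ is just a rearrangement of the resulting inequality. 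You instead attempt to rebuild an eigenvalue error bound from operator perturbation theory, and that derivation has two genuine gaps.

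First, the inequality $\vert 1/\lambda^{(i)} - 1/\lambda_h^{(i)} \vert \le \Vert (S-S_h)\vert_W \Vert$ is not Weyl's inequality: Weyl compares ordered eigenvalues through the \emph{global} norm $\Vert S - S_h \Vert_{\mathcal{L}(L^2(\Omega))}$, and an index-by-index bound in terms of the perturbation restricted to a fixed finite-dimensional subspace is precisely what it does not give. A correct substitute exists, but it is one-sided and argued differently: test the Courant--Fischer characterisation of the $i$-th eigenvalue of $S_h$ with the trial space $W$ to get $1/\lambda_h^{(i)} \ge 1/\lambda^{(i)} - \sup_{v \in W,\, \Vert v \Vert_{L^2}=1} \vert ((S-S_h)v,v)_{L^2} \vert$, then use conformity ($\lambda_h^{(i)} \ge \lambda^{(i)}$) for the other direction; note also that your Frobenius step needs an $L^2$-orthonormal basis of $W$, whereas the paper normalises eigenfunctions in $H^1$. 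Second, and decisive: the constant bookkeeping you defer is exactly where the argument fails, because the factor $1/\lambda^{(i)}$ is double-counted. In the duality bound $\Vert (S-S_h)e^{(i)} \Vert_{L^2} \lesssim h^{2q} \vert e^{(i)} \vert_{H^{q+1}} / \lambda^{(i)}$, the $1/\lambda^{(i)}$ is already consumed in compensating the growth of the eigenfunction seminorm, since elliptic regularity gives $\vert e^{(i)} \vert_{H^{q+1}} \lesssim \lambda^{(i)} \Vert e^{(i)} \Vert_{L^2}$; the bound is therefore simply $\lesssim h^{2q}$ uniformly in $i$, and nothing is left over to cancel one of the two powers in the prefactor $\lambda^{(i)}\lambda_h^{(i)}$ coming from your perturbation step. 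Carried out honestly, your chain yields $\lambda_h^{(i_\ast)} - \lambda^{(i_\ast)} \le C \sqrt{i_\ast}\, \lambda^{(i_\ast)} \lambda_h^{(i_\ast)}\, h^{2q}$, i.e.~a sufficient condition of the form $h^{2q} \lesssim (k^2-\lambda^{(i_\ast)})/(\sqrt{i_\ast}\, \lambda^{(i_\ast)} k^2)$, which is more restrictive than the stated one by a factor of order $k^2$. Since the constant $C$ in the corollary is meant to be independent of $k$, $i_\ast$ and $\lambda^{(i_\ast)}$ (those dependencies being displayed explicitly), what you prove is a strictly weaker statement, not the corollary as written; the claimed ``collapse to a single power of $\lambda^{(i_\ast)}$'' is asserted but never achieved.
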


\begin{proof}
    Since conforming discretisations approximate eigenvalues from above \cite{Acta10}, we only have to consider the index $i_{\ast}$, and it suffices to ensure that 
    \begin{align*}
        \lambda_h^{(i_{\ast})} - \lambda^{(i_{\ast})} < k^2 - \lambda^{(i_{\ast})}. 
    \end{align*}
    To bound the left-hand side, we can use approximation results for the eigenvalues. For instance \cite[Thm. 47.10]{EG_FE2} yields 
    \begin{align*}
        \lambda_h^{(i_{\ast})} - \lambda^{(i_{\ast})} &\le \lambda^{(i_\ast)} 4 \sqrt{i_{\ast}} C \max_{v \in S_{i_{\ast}}} \min_{v_h \in V_h} \Vert v - v_h \Vert^2_V \\
        &\le C \lambda^{(i_\ast)} 4 \sqrt{i_{\ast}} h^{2q},
    \end{align*}
    where $S_{i_{\ast}}$ is the unit sphere, with resepct to $\norm{\cdot}_{L^2(\Omega)}$, in the span of the first $i_{\ast}$-eigenfunctions. Thus, we calculate 
    \begin{align*}
        C \lambda^{(i_\ast)} 4 \sqrt{i_{\ast}} h^{2q} < k^2 - \lambda^{(i_{\ast})} 
        \Longleftrightarrow \quad h^{2q} < \frac{1}{C} \left( \frac{k^2 - \lambda^{(i_\ast)}}{4 \sqrt{i_{\ast}} \lambda^{(i_\ast)}} \right).
    \end{align*}
\end{proof}
The constant $C$ in the previous corollary is in general not computable, because it involves a constant related to the elliptic regularity shift of the Laplace operator.
\begin{remark}
    From the previous corollary, we see that as the wave number $k$ gets closer to the eigenvalue $\lambda^{(i_{\ast})}$, the mesh size $h$ has to be chosen smaller to ensure the quasi-optimality of the discretisation.
    Similiar observations can be found in \cite{CFV22,CF24}.
\end{remark}

\begin{ex}[Unit-square]
    As a first numerical example, we consider the case of pure Dirichlet boundary conditions, i.e.~$\Gamma_D = \partial \Omega$, on the unit-square $\Omega = [0,1]^2 \subset \mathbb{R}^2$. On this geometry, the eigenvalues of the Laplacian are given by 
    \begin{equation}\label{eq:unitSquare:EV}
        \lambda_{i,j} = \pi^2 (i^2 + j^2), \quad i,j \in \mathbb{N}.
    \end{equation}
    Since we know the exact eigenvalues, we can determine the index $i_{\ast}$ directly. We solve the discrete problem \eqref{eq:discreteProblem} with $H^1$-conforming elements of order $p = 1$ and $p = 2$. Figure \ref{fig:unitsquareDirichlet} shows that the condition $\lambda_h^{(i_{\ast})} < k^2$ is indeed sufficient to ensure quasi-optimality.
    \begin{figure}[!htbp]
        \begin{center}
            \begin{tikzpicture}[scale=0.75]
                \begin{groupplot}[%
                    group style={%
                    group size=2 by 2,
                    horizontal sep=1.5cm,
                    vertical sep=2cm,
                    },
                ymajorgrids=true,
                grid style=dashed,
                ]       
                \nextgroupplot[width=9cm,height=7cm,domain=2:5,xmode=log,ymode=log, xlabel={$h$}, ylabel={$\norm{u-u_h}_{L^2(\Omega)}$}, title={$k^2=100$}, 
                legend style={legend columns=3, draw=none,nodes={scale=.8}}, 
                legend to name=named]
        
                \addplot+[orange,line width=1.5pt,mark=None] table [x=h, y=error, col sep=comma] {error_omega10.0_p1_square.csv};
                \addplot+[violet,line width=1.5pt,mark=None] table [x=h, y=error, col sep=comma] {error_omega10.0_p2_square.csv}; 
                
                \addplot[gray,dashed,domain=1e-2:0.06] {350271*x^2};
                \node [draw=none] at (axis description cs:0.25,0.7) {\color{gray}\footnotesize $\!\!\mathcal{O}(h^{2})$};
                
                \addplot[gray,dashed,domain=1e-2:0.2] {25000*x^3};
                \node [draw=none] at (axis description cs:0.15,0.25) {\color{gray}\footnotesize $\!\!\mathcal{O}(h^{3})$};

                \draw[orange, dashed,thick] (0.0419,1e-6) -- (0.0419,1e6);
                \draw[violet, dashed,thick] (0.1638,1e-6) -- (0.1638,1e6); 
                \legend{$p=1\quad$,$p=2$}
                \nextgroupplot[width=9cm,height=7cm,domain=2:5,xmode=log,ymode=log, xlabel={$h$}, ylabel={}, title={$k^2=144$}, 
                legend pos=south east, 
                xtick={0.316,0.1,0.0316},
                    ]
                \addplot+[orange,line width=1.5pt,mark=None] table [x=h, y=error, col sep=comma] {error_omega12.0_p1_square.csv}; 
                \addplot+[line width=1.5pt,mark=None, violet] table [x=h, y=error, col sep=comma] {error_omega12.0_p2_square.csv}; 
    
                \draw[orange, dashed,thick] (0.1024,1e-3) -- (0.1024,1e2);
                \draw[violet, dashed,thick] (0.256,1e-3) -- (0.256,1e2);
    
                \addplot[gray,dashed,domain=0.025:0.12] {770.88*x^2};
                \node [draw=none] at (axis description cs:0.25,0.7) {\color{gray}\footnotesize $\!\!\mathcal{O}(h^{2})$};
                
                \addplot[gray,dashed,domain=0.025:0.27] {1500*x^3};
                \node [draw=none] at (axis description cs:0.15,0.3) {\color{gray}\footnotesize $\!\!\mathcal{O}(h^{3})$};
                \nextgroupplot[width=9cm,height=7cm,domain=2:5,xmode=log,ymode=log, xlabel={$h$}, ylabel={$\norm{u-u_h}_{L^2(\Omega)}$}, title={$k^2=225$}, 
                legend pos=south east, 
                xtick={0.316,0.1,0.0316},
                    ]
                \addplot+[orange,line width=1.5pt,mark=None] table [x=h, y=error, col sep=comma] {error_omega15.0_p1_square.csv};
                \addplot+[line width=1.5pt,mark=None, violet] table [x=h, y=error, col sep=comma] {error_omega15.0_p2_square.csv};
                    
                \draw[orange,dashed,thick] (0.0819,1e-3) -- (0.0819,1e3);
                \draw[violet, dashed,thick] (0.2048,1e-3) -- (0.2048,1e3);
    
                \addplot[gray,dashed,domain=0.025:0.1] {1200*x^2};
                \node [draw=none] at (axis description cs:0.25,0.7) {\color{gray}\footnotesize $\!\!\mathcal{O}(h^{2})$};
                
                \addplot[gray,dashed,domain=0.025:0.27] {3000*x^3};
                \node [draw=none] at (axis description cs:0.15,0.35) {\color{gray}\footnotesize $\!\!\mathcal{O}(h^{3})$};
                
                \nextgroupplot[width=9cm,height=7cm,domain=2:5,xmode=log,ymode=log, xlabel={$h$}, ylabel={}, title={$k^2=400$}, 
                legend pos=south east,
                    ]
                \addplot+[orange,line width=1.5pt,mark=None] table [x=h, y=error, col sep=comma] {error_omega20.0_p1_square.csv};
                \addplot+[line width=1.5pt,mark=None, violet] table [x=h, y=error, col sep=comma] {error_omega20.0_p2_square.csv};
    
    
                \draw[orange, dashed,thick] (0.02147,1e-4) -- (0.02147,1e4);
                \draw[violet, dashed,thick] (0.10485,1e-4) -- (0.10485,1e4);
                    
                \addplot[gray,dashed,domain=0.005:0.1] {1250*x^2};
                \node [draw=none] at (axis description cs:0.25,0.55) {\color{gray}\footnotesize $\!\!\mathcal{O}(h^{2})$};
                
                \addplot[gray,dashed,domain=0.005:0.27] {5500*x^3};
                \node [draw=none] at (axis description cs:0.15,0.25) {\color{gray}\footnotesize $\!\!\mathcal{O}(h^{3})$};
                \end{groupplot}
            \end{tikzpicture}
            \pgfplotslegendfromname{named}
        \end{center}
        \caption{$L^2$-error of the approximation of the Helmholtz problem with homogeneous Dirichlet boundary conditions against a computed reference solution, computed with higher polynomial degree, for $k^2 \in \{100,144,225,400\}$. The vertical lines indicate when $\lambda_h^{(i_\ast)} < k^2$ after which we expect quasi-optimality.}
        \label{fig:unitsquareDirichlet}
    \end{figure}
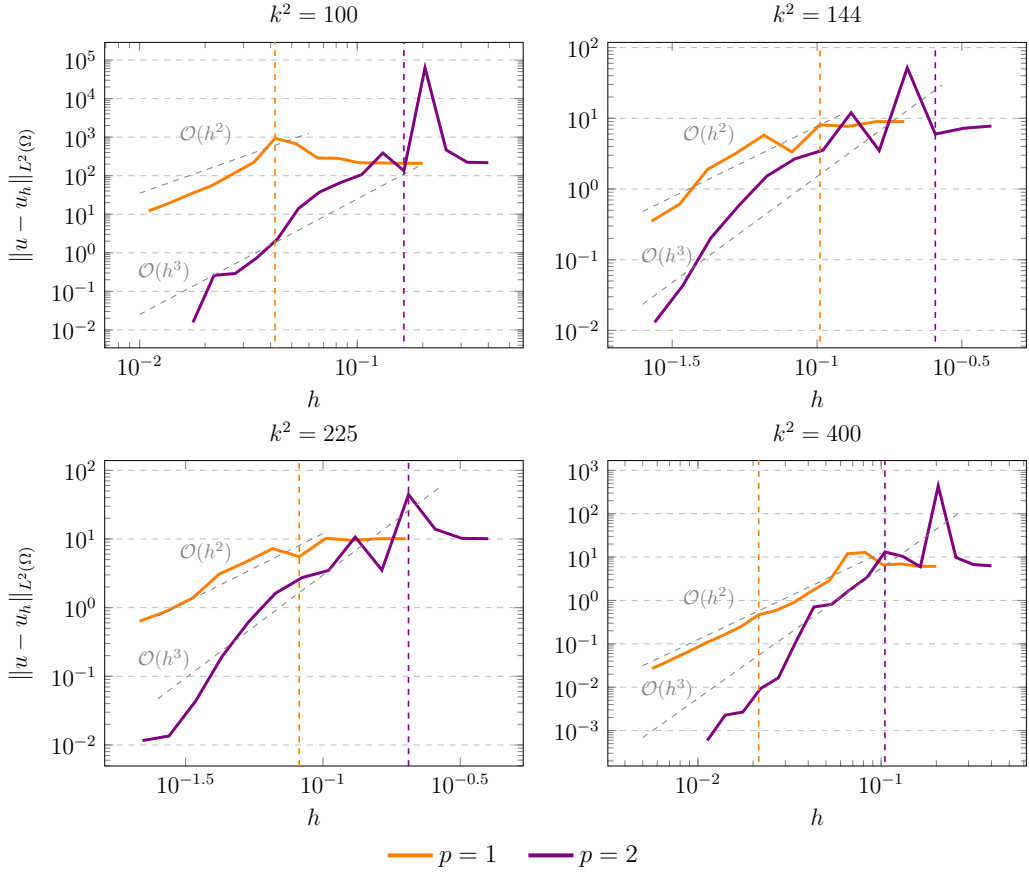
\end{ex}

\subsection{Crouzeix-Raviart finite element discretisation}
We here consider the non-conforming discretisation of \eqref{eq:weakForm} with Crouzeix-Raviart finite elements \cite{CR73}. We denote by $\mathcal{E}$ the set of edges of the triangulation $\mathcal{T}_h$ and define the spaces
\begin{equation*}
    \begin{aligned}
        V_h &:= \{ v_h \in \mathcal{P}^1(\mathcal{T}_h) : v_h \text{ is continuous on } \operatorname{mid}(\mathcal{E}), v = 0 \text{ on } \operatorname{mid}(\mathcal{E} \cap \Gamma_D)\}, \\
        V_{h,0} &:= \{ v_h \in \mathcal{P}^1(\mathcal{T}_h) : v_h \text{ is continuous on } \operatorname{mid}(\mathcal{E}), v = 0 \text{ on } \operatorname{mid}(\mathcal{E} \cap \partial \Omega)\},
    \end{aligned} 
\end{equation*}
where $\mathcal{P}^1(\mathcal{T}_h)$ denotes the space of piecewise linear polynomials on $\mathcal{T}_h$ and $\operatorname{mid}(\mathcal{E}\cap B)$ is the set of midpoints of the edges $\mathcal{E}\cap B \subseteq \mathcal{E}$.
The spaces $V_h$ and $V_{h,0}$ are not $H^1$-conforming, since their continuity is only imposed on the Gauss points on the edges.
We note that a discrete Poincar\'e inequality holds on $V_{h,0}$ and therefore the bilinear form $a_h(\cdot,\cdot)$ is uniformly coercive on $V_{h,0}$ \cite[Lem.~36.6]{EG_FE2}.
In particular, the assumptions on the discrete eigenvalue problem \eqref{eq:discreteEVP} are satisfied \cite{DGP99,Acta10}. Thus, we can apply Thm.~\ref{thm:mainresult} to conclude the well-posedness of the discrete problem \eqref{eq:discreteProblem}.

\begin{ex}[CR vs. $\mathcal{P}^1$]
    In this example we want to compare the approximation with non-conforming Crouzeix-Raviart elements and the approximation with the $H^1$-conforming $\mathcal{P}^1$ element.
    As in the previous examples, we consider the unit square, where the exact eigenvalues of the Laplacian are given by \eqref{eq:unitSquare:EV}.
    In Fig.~\ref{fig:CRvsP1}, we compare the $L^2$-error and the onset of quasi-optimality for both discretisations and increasing wave number.
    We observe that the criterion $\lambda_h^{(i_{\ast})} < k^2 < \lambda_h^{(i_{\ast}+1)}$ indeed guarantees the quasi-optimality of the approximation with Crouzeix-Raviart elements, similarly to what we observed for the $\mathcal{P}^1$-discretisation.
    Furthermore, for all considered wave numbers, the error of the discretisation with Crouzeix-Raviart elements is smaller than the error of the $\mathcal{P}^1$-discretisation once quasi-optimality is achieved.
    
    Depending on the distance between the wave number $k^2$ to the reference eigenvalue $\lambda_h^{(i_{\ast})}$, either the Crouzeix-Raviart discretisation or the $\mathcal{P}^1$ discretisation achieves quasi-optimality first.
    This is due to the fact that the discrete eigenvalues are approximated from above with $\mathcal{P}^1$-elements while in the considered cases Crouzeix-Raviart elements approximate the eigenvalues from below.
    Therefore, if the gap between the wave number $k^2$ and the reference eigenvalue $\lambda^{(i_{\ast})}$ is small, the $\mathcal{P}^1$-approximation requires a small mesh size to ensure that $\lambda_h^{(i_{\ast})} <k^2$ while the Crouzeix-Raviart approximation achieves $\lambda_h^{(i_{\ast}+1)} > k^2$ much sooner.
    In contrast, if $k^2$ is close to $\lambda^{(i_{\ast}+1)}$, the $\mathcal{P}^1$-approximation achieves quasi-optimality sooner than the Crouzeix-Raviart discretisation. Figure \ref{fig:CRvsP1EVs} illustrates the different approximations of the eigenvalues and the required mesh size to ensure that $\lambda_h^{(i_{\ast})} < k^2 < \lambda_h^{(i_{\ast}+1)}$.
    In particular, we observe that the mesh size requirements on the Crouzeix-Raviart discretisation seem to be milder than the mesh size requirements on the $\mathcal{P}^1$-discretisation.
    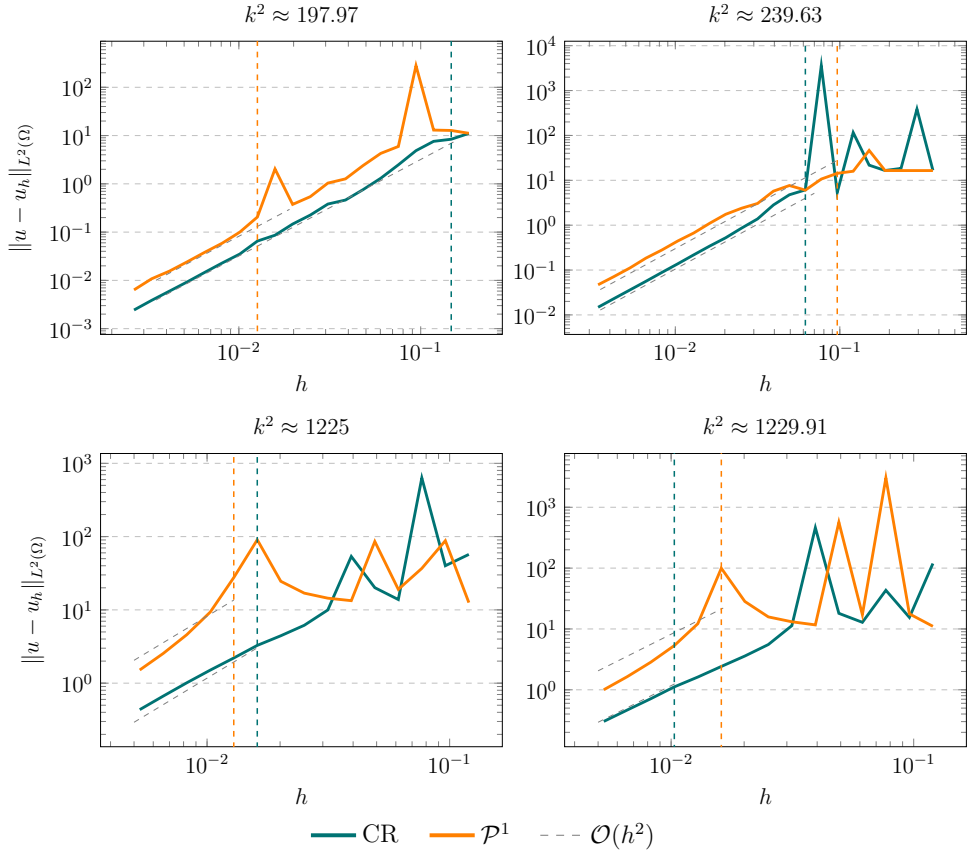
\begin{figure}[!htbp]
        \begin{center}
            \begin{tikzpicture}[scale=0.72]
                \begin{groupplot}[%
                    group style={%
                    group size=2 by 2,
                    horizontal sep=1.15cm,
                    vertical sep=2.2cm,
                    },
                ymajorgrids=true,
                grid style=dashed,
                ]       
                \nextgroupplot[width=9cm,height=7cm,domain=2:5,xmode=log,ymode=log, xlabel={$h$}, ylabel={$\Vert u - u_h \Vert_{L^2(\Omega)}$}, title={$k^2 \approx 197.97$}, 
                legend style={legend columns=3, draw=none,nodes={scale=.8}}, 
                legend to name=named
                    ]

                \addplot+[teal!90!black,line width=1.5pt,mark=None] table [x=h, y=error, col sep=comma] {cr_error_omega14.07_p1.csv};
                \addplot+[orange,line width=1.5pt,mark=None] table [x=h, y=error, col sep=comma] {error_omega14.07_p1.csv};
                    
                \addplot[gray,dashed,domain=0.0035:0.16] {320*x^2};
                \addplot[gray,dashed,domain=0.0035:0.019] {820*x^2};
                \draw[orange, dashed,thick] (0.01264438371942401,1e-4) -- (0.01264438371942401,1e3);
                \draw[teal!90!black, dashed,thick] (0.14720000000000003,1e-4) -- (0.14720000000000003,1e3);

                    
                \legend{CR$\quad$,$\mathcal{P}^1\quad$, $\mathcal{O}(h^2)$}
                \nextgroupplot[width=9cm,height=7cm,domain=2:5,xmode=log,ymode=log, xlabel={$h$}, ylabel={}, title={$k^2 \approx 239.63$}, 
                    ]

                \addplot+[teal!90!black,line width=1.5pt,mark=None] table [x=h, y=error, col sep=comma] {cr_error_omega15.48_p1_temp.csv};
                \addplot+[orange,line width=1.5pt,mark=None] table [x=h, y=error, col sep=comma] {error_omega15.48_p1_temp.csv};
                \addplot[gray,dashed,domain=0.0035:0.07] {1050*x^2};
                \addplot[gray,dashed,domain=0.0035:0.1] {3000*x^2};

                \draw[orange, dashed,thick] (0.09646899200000005,1e-3) -- (0.09646899200000005,1e4);
                \draw[teal!90!black, dashed,thick] (0.06174015488000004,1e-3) -- (0.06174015488000004,1e4);

                \nextgroupplot[width=9cm,height=7cm,domain=2:5,xmode=log,ymode=log, xlabel={$h$}, ylabel={$\Vert u - u_h \Vert_{L^2(\Omega)}$}, title={$k^2 \approx 1225$}, 
                    ]

                \addplot+[teal!90!black,line width=1.5pt,mark=None] table [x=h, y=error, col sep=comma] {cr_error_omega35.0_p1.csv};
                \addplot+[orange,line width=1.5pt,mark=None] table [x=h, y=error, col sep=comma] {error_omega35.0_p1.csv};
                
                \addplot[gray,dashed,domain=0.005:0.0162] {11700*x^2};
                \addplot[gray,dashed,domain=0.005:0.0129] {82000*x^2};

                \draw[orange, dashed,thick] (0.01288490188800001,1e-4) -- (0.01288490188800001,1e4);
                \draw[teal!90!black, dashed,thick] (0.01610612736000001,1e-4) -- (0.01610612736000001,1e4);

                \nextgroupplot[width=9cm,height=7cm,domain=2:5,xmode=log,ymode=log, xlabel={$h$}, ylabel={}, title={$k^2 \approx 1229.91$}, 
                    ]

                \addplot+[teal!90!black,line width=1.5pt,mark=None] table [x=h, y=error, col sep=comma] {cr_error_omega35.07_p1.csv};
                \addplot+[orange,line width=1.5pt,mark=None] table [x=h, y=error, col sep=comma] {error_omega35.07_p1.csv};

                \addplot[gray,dashed,domain=0.005:0.0104] {11700*x^2};
                \addplot[gray,dashed,domain=0.005:0.0164] {82000*x^2};

                \draw[orange, dashed,thick] (0.01610612736000001,1e-4) -- (0.01610612736000001,1e4);
                \draw[teal!90!black, dashed,thick] (0.010307921510400008,1e-4) -- (0.010307921510400008,1e4);

                \end{groupplot}
            \end{tikzpicture}
            \pgfplotslegendfromname{named}
        \end{center}\vspace*{-0.1cm}
        \caption{We compare the approximation error using Crouzeix-Raviart elements and conforming $\mathcal{P}^1$-elements.
        For increasing wave number $k^2$, we compute the $L^2$-error against a reference solution computed with a higher order $H^1$-conforming method.
        The dashed lines indicate the mesh size at which the criterion $\lambda_h^{(i_{\ast})} < k^2$ (for the $\mathcal{P}^1$-case) or $\lambda_h^{(i_{\ast}+1)} > k^2$ (for the Crouzeix-Raviart case).
        For the first two wave-numbers (top row) the relevant eigenvalues are $\lambda^{(i_{\ast})} = 197.39$ and $\lambda^{(i_{\ast}+1)} = 246.74$, whereas for the other wave-numbers (bottom row) the relevant eigenvalues are $\lambda^{(i_{\ast})} = 1224.09$ and $\lambda^{(i_{\ast}+1)} = 1233.70$.}
        \label{fig:CRvsP1}
    \end{figure}
 
    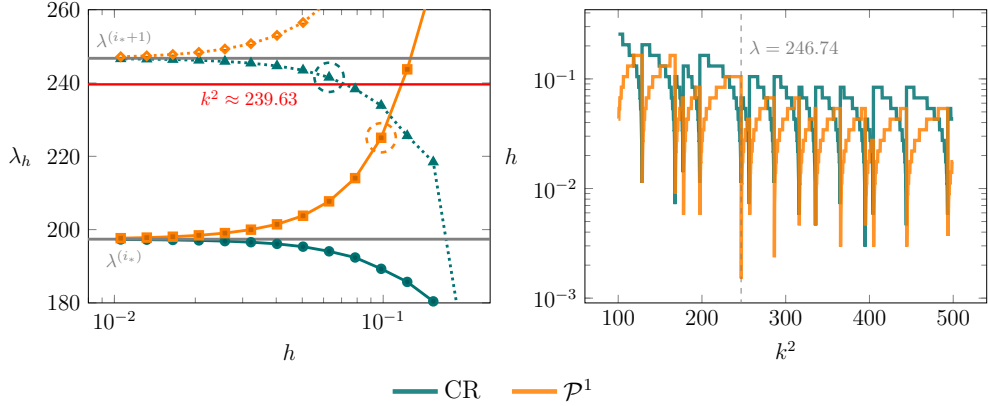
\begin{figure}[!htbp]
        \begin{center}
            \begin{tikzpicture}[scale=0.72]
                \begin{groupplot}[%
                    group style={%
                    group size=2 by 1,
                    horizontal sep=1.75cm,
                    vertical sep=2cm,
                    }, 
                ]
                \nextgroupplot[width=9cm,height=7cm,domain=2:5,xmode=log,ymode=linear, xlabel={$h$}, ylabel={\rotatebox{270}{$\lambda_h$}}, title={}, 
                ymin = 180, ymax = 260, xmin=0.8e-2, xmax=2.5e-1,
                    ]
        
                \addplot+[teal!90!black,line width=1.5pt] table [x=h, y=EV_i, col sep=comma] {cr_error_omega15.48_p1.csv};
                \addplot+[orange,line width=1.5pt] table [x=h, y=EV_i, col sep=comma] {error_omega15.48_p1.csv};

                \addplot+[teal!90!black,line width=1.5pt,dotted] table [x=h, y=EV_ipo, col sep=comma] {cr_error_omega15.48_p1.csv};
                \addplot+[orange,line width=1.5pt,dotted] table [x=h, y=EV_ipo, col sep=comma] {error_omega15.48_p1.csv};

                \addplot+[gray,line width=1.5pt,mark=None,domain=1e-5:0.25] {197.39};
                \addplot+[gray,line width=1.5pt,mark=None,domain=1e-5:0.25] {246.74};
                \node at (axis description cs:0.09,0.15) {\color{gray}\footnotesize $\lambda^{(i_{\ast})}$};
                \node at (axis description cs:0.09,0.9) {\color{gray}\footnotesize $\lambda^{(i_{\ast}+1)}$};

                \draw[draw=orange,fill=none,line width=1.35pt,dashed] (0.09830400000000002,225.05637163188018) circle[radius=0.27cm];
                \draw[draw=teal!90!black,fill=none,line width=1.35pt,dashed] (0.06291456000000002,241.54226257199525) circle[radius=0.27cm];
                \addplot+[red,line width=1.25pt,mark=None,domain=1e-5:0.25] {15.48*15.48};
                \node at (axis description cs: 0.4,0.7) {\color{red} \footnotesize $k^2 \approx 239.63$};



                \nextgroupplot[width=9cm,height=7cm,domain=2:5,xmode=linear,ymode=log, xlabel={$k^2$}, ylabel={\rotatebox{270}{$h$}}, title={}, 
                legend style={legend columns=2, draw=none,nodes={scale=.8}}, 
                legend to name=named
                    ]

                \addplot+[teal!90!black,line width=1.75pt,mark=None,opacity=0.85] table [x=wavenumbers, y=hCR, col sep=comma] {CRvsP1h_100.0_to_500.0.csv};
                \addplot+[orange,line width=1.75pt,mark=None,opacity=0.85] table [x=wavenumbers, y=hP1, col sep=comma] {CRvsP1h_100.0_to_500.0.csv};

                \draw[gray,dashed] (246.74,1e-4) -- (246.74,1e0);
                \node [draw=none] at (axis description cs:0.52,0.87) {\color{gray}\footnotesize $\lambda = 246.74$};

                \legend{CR$\quad$,$\mathcal{P}^1$}

                \end{groupplot}
            \end{tikzpicture} \\
            \pgfplotslegendfromname{named}
        \end{center}\vspace*{-0.1cm}
        \caption{On the left, we compare the approximation of the eigenvalues $\lambda^{(i)} = 197.39$ (solid) and $\lambda^{(i+1)} = 246.74$ (dashed) with Crouzeix-Raviart and $\mathcal{P}^1$-elements. For $k^2 \approx 239.63$, cf.~Figure \ref{fig:CRvsP1}, the $\mathcal{P}^1$-discretisation reaches quasi-optimality (dashed circle) before the Crouzeix-Raviart discretisation.
        On the right,  we compare mesh size required for the criterion $\lambda_h^{(i_{\ast})} < k^2 < \lambda_h^{(i_{\ast}+1)}$ to be met with the respective approximations for wave numbers $k^2\in [100,500]$. When the reference eigenvalue $\lambda^{(i_{\ast})}$ changes, the requirements on the mesh size become suddenly stricter for the $\mathcal{P}^1$-discretisation and more relaxed for the Crouzeix-Raviart discretisation, consider for example the moment the wave number becomes larger than the eigenvalue $\lambda = 246.74$.}
        \label{fig:CRvsP1EVs}
    \end{figure}
\end{ex}

While the approximation of the eigenvalues with a conforming $\mathcal{P}^1$-element is monotone and converges from above, the Crouzeix--Raviart element are much more interesting in this regard.
In fact, Crouzeix--Raviart discretisations of the Lapalcian can approximate the eigenvalues of the operator from both above and below. Yet some simple post-processing techniques can be used to obtain guaranteed lower bounds for the eigenvalues.
\begin{proposition}[Guaranteed lower bounds \cite{CG14}]
    Let us consider the discrete eigenvalue problem \eqref{eq:discreteEVP} with Crouzeix--Raviart elements.
    We will here assume the numerical linear algebraic problem of computing the eigenvalues is solved exactly and that for a fixed $j\in \mathbb{N}$
    \begin{equation}
        \label{eq:separation}
        h\leq \frac{\sqrt{1+\frac{1}{j}}-1}{\kappa \sqrt{\lambda^{(j)}}},
    \end{equation}
    where $h$ is the mesh size of triangulation $\mathcal{T}_h$ and $\kappa$ is a constant related to the Poincar\'e inequality on a single element, here taken to be $\kappa \leq 0.1932$.
    Then the following bound must also hold
    \begin{equation}
        \underline{\lambda}_h^{(j)}\coloneqq \frac{\lambda_h^{(j)}}{1+\kappa^2\lambda_h^{(j)}h^2} \leq \lambda^{(j)},
    \end{equation}
\end{proposition}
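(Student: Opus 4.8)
The plan is to follow the strategy of Carstensen--Gedicke and reduce the bound to a comparison of Rayleigh quotients transported through the non-conforming interpolation operator. First I would introduce the Crouzeix--Raviart interpolant $I_{\mathrm{CR}} : V \cap H^1_0(\Omega) \to V_{h,0}$ characterised by preservation of edge averages. Its decisive property is that the broken gradient $\nabla_h I_{\mathrm{CR}} v$ equals $\Pi_0 \nabla v$, the elementwise $L^2(\Omega)$-projection of $\nabla v$ onto piecewise constants. From this single fact I get two free ingredients: the stability estimate $\norm{\nabla_h I_{\mathrm{CR}} v}_{L^2(\Omega)} \le \norm{\nabla v}_{L^2(\Omega)}$, and the Pythagorean identity
\[
    \norm{\nabla v}^2_{L^2(\Omega)} = \norm{\nabla_h I_{\mathrm{CR}} v}^2_{L^2(\Omega)} + \norm{\nabla_h(v - I_{\mathrm{CR}} v)}^2_{L^2(\Omega)}.
\]
With $E_j := \operatorname{span}\{\ei : 1 \le i \le j\}$ the span of the first $j$ continuous eigenfunctions, the condition \eqref{eq:separation} guarantees $\kappa h \sqrt{\lambda^{(j)}} < 1$, which I would use to ensure that $I_{\mathrm{CR}}$ is injective on $E_j$ (so that $I_{\mathrm{CR}}(E_j) \subset V_{h,0}$ is genuinely $j$-dimensional), that the denominators appearing below stay positive, and that the $j$-th discrete and continuous indices are correctly matched.

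The quantitative heart of the argument is a \emph{local} estimate on each triangle $T$, which I would establish by scaling to a reference element and bounding the resulting finite-dimensional quadratic form, namely
\[
    \norm{v}^2_{L^2(T)} \le \norm{I_{\mathrm{CR}} v}^2_{L^2(T)} + \kappa^2 h_T^2 \norm{\nabla v}^2_{L^2(T)} .
\]
The worst case $I_{\mathrm{CR}} v|_T = 0$ (functions with vanishing edge means) reduces this to the Poincaré inequality $\norm{v}_{L^2(T)} \le \kappa h_T \norm{\nabla v}_{L^2(T)}$, which is exactly what fixes the explicit constant $\kappa \le 0.1932$; the general case must be controlled by the same constant. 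Summed over $T$ this gives $\norm{I_{\mathrm{CR}} v}^2_{L^2(\Omega)} \ge \norm{v}^2_{L^2(\Omega)} - \kappa^2 h^2 \norm{\nabla v}^2_{L^2(\Omega)}$.

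With both ingredients in place I would invoke the Courant--Fischer min--max principle for the discrete eigenvalue problem \eqref{eq:discreteEVP}: since $\dim I_{\mathrm{CR}}(E_j) = j$,
\[
    \lambda_h^{(j)} \le \max_{0 \ne v \in E_j} \frac{\norm{\nabla_h I_{\mathrm{CR}} v}^2_{L^2(\Omega)}}{\norm{I_{\mathrm{CR}} v}^2_{L^2(\Omega)}} .
\]
Bounding the numerator by $\norm{\nabla v}^2 \le \lambda^{(j)} \norm{v}^2$ (eigenfunction expansion) and the denominator by the local estimate, each quotient is at most $g/(1 - \kappa^2 h^2 g)$ with $g = \norm{\nabla v}^2/\norm{v}^2 \le \lambda^{(j)}$; monotonicity of $t \mapsto t/(1 - \kappa^2 h^2 t)$ then yields $\lambda_h^{(j)} \le \lambda^{(j)}/(1 - \kappa^2 h^2 \lambda^{(j)})$, and applying the increasing map $t \mapsto t/(1 + \kappa^2 h^2 t)$ rearranges this precisely into $\underline{\lambda}_h^{(j)} \le \lambda^{(j)}$. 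I expect the main obstacle to be the sharp local estimate: a naive triangle inequality only delivers $\norm{I_{\mathrm{CR}} v} \ge (1 - \kappa h \sqrt{\lambda^{(j)}}) \norm{v}$, giving the weaker and non-computable factor $(1 - \kappa h \sqrt{\lambda^{(j)}})^{-2}$, whereas the clean guaranteed form genuinely requires exploiting the gradient-projection structure to bound $\norm{v}^2 - \norm{I_{\mathrm{CR}} v}^2$ directly, and pinning down the explicit value of $\kappa$ on the reference triangle.
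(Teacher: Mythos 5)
First, a point of comparison: the paper itself gives no proof of this proposition --- it is imported from \cite{CG14} --- so your attempt can only be measured against the argument of that reference. Your skeleton does match it: the Crouzeix--Raviart interpolant with the gradient-projection property $\nabla_h I_{\mathrm{CR}} v = \Pi_0 \nabla v$, the Pythagoras identity, the min--max principle applied to the $j$-dimensional space $I_{\mathrm{CR}}(E_j)$, an elementwise Poincar\'e estimate with explicit constant $\kappa$, and the final monotone rearrangement are all the right ingredients. The genuine gap is the step you yourself call the quantitative heart: the local estimate
\begin{equation*}
    \norm{v}^2_{L^2(T)} \le \norm{I_{\mathrm{CR}} v}^2_{L^2(T)} + \kappa^2 h_T^2 \norm{\nabla v}^2_{L^2(T)}
\end{equation*}
is false, and not just for the stated value of $\kappa$ --- no finite constant makes it true. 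Take $v = 1 + \epsilon\, b_T$ on a single triangle, where $b_T$ is the cubic bubble (product of the barycentric coordinates). Since $b_T$ vanishes on $\partial T$, all edge averages of $v$ equal $1$, so $I_{\mathrm{CR}} v \equiv 1$, and the claimed inequality becomes $2\epsilon \int_T b_T + \epsilon^2 \norm{b_T}^2_{L^2(T)} \le \kappa^2 h_T^2\, \epsilon^2 \norm{\nabla b_T}^2_{L^2(T)}$. The left-hand side is positive and of first order in $\epsilon$, the right-hand side of second order, so the inequality fails for all small $\epsilon > 0$. The culprit is exactly the cross term $2(I_{\mathrm{CR}}v,\, v - I_{\mathrm{CR}}v)_{L^2(T)}$: $I_{\mathrm{CR}}$ is not an $L^2$ projection and its error does not have vanishing element means, so this term cannot be ``exploited away'' by any reference-element argument.

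The repair --- and this is what \cite{CG14} actually does --- is that the first-order loss in the denominator does not need to be removed; it is compensated by a gain in the \emph{numerator}. Keep the broken gradient of the interpolation error as a free parameter: for $v \in E_j$ with $\norm{v}_{L^2(\Omega)} = 1$, set $g \coloneqq \norm{\nabla v}^2_{L^2(\Omega)} \le \lambda^{(j)}$ and $d \coloneqq \norm{\nabla_h (v - I_{\mathrm{CR}} v)}_{L^2(\Omega)}$. The Pythagoras identity gives the numerator \emph{exactly} as $\norm{\nabla_h I_{\mathrm{CR}} v}^2_{L^2(\Omega)} = g - d^2$, while the elementwise Poincar\'e estimate plus the plain triangle inequality give $\norm{I_{\mathrm{CR}} v}_{L^2(\Omega)} \ge 1 - \kappa h d > 0$ (positivity, and the injectivity of $I_{\mathrm{CR}}$ on $E_j$, follow since the separation condition \eqref{eq:separation} implies $\kappa h \sqrt{\lambda^{(j)}} < 1$ and $d \le \sqrt{g}$). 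Hence
\begin{equation*}
    \frac{\norm{\nabla_h I_{\mathrm{CR}} v}^2_{L^2(\Omega)}}{\norm{I_{\mathrm{CR}} v}^2_{L^2(\Omega)}}
    \le \frac{g - d^2}{(1 - \kappa h d)^2}
    \le \max_{0 \le \delta \le \sqrt{g}} \frac{g - \delta^2}{(1 - \kappa h \delta)^2}
    = \frac{g}{1 - \kappa^2 h^2 g},
\end{equation*}
the maximum being attained at $\delta = \kappa h g$: the quadratic loss $-2\kappa h \delta + \kappa^2 h^2 \delta^2$ in the denominator is paid for by the $-\delta^2$ gained in the numerator, and this exchange is exactly sharp. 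Combining with the min--max principle over $I_{\mathrm{CR}}(E_j)$ and the monotonicity in $g$ yields $\lambda_h^{(j)} \le \lambda^{(j)} / (1 - \kappa^2 h^2 \lambda^{(j)})$, which rearranges precisely to $\underline{\lambda}_h^{(j)} \le \lambda^{(j)}$ --- the same closing algebra you wrote, which is correct. So your diagnosis was inverted: the triangle inequality is not the weak link (it suffices once the numerator is written as $g - d^2$ rather than merely bounded by $g$), whereas the ``clean'' local $L^2$ estimate you proposed to prove instead is unprovable.
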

Following the same idea presented in \cite{CG14}, we can compute an upper bound for the eigenvalues by interpolating the eigenfunctions $e_h^{(j)}$ on the $H^1$-conformingm $\mathcal{P}^1(\mathcal{T}_h)$ space and computing the Rayleigh quotient of the interpolated function.
We will here denote such upper bound as $\overline{\lambda}_h^{(j)}$.
While at first the separation condition \eqref{eq:separation} might seem stringent, we can observe that we are only interested in the eigenvalues $\lambda^{(j)}$ for which $\lambda^{(j)}<k^2$, hence the separation condition is verified if 
\begin{equation}
    h \leq \frac{\sqrt{1+\frac{1}{j}}-1}{\kappa}k^{-1}\sim\frac{\sqrt{1+(dk)^{-1}}-1}{\kappa}k^{-1}\sim k^{-\frac{3}{2}},
\end{equation}
where we used Weyl's inequality to obtain the last identity.
Therefore \eqref{eq:separation} is equivalent to ask that the mesh size decreases as $k^{-\frac{3}{2}}$, which is a reasonable assumption for the mesh size in the context of the Helmholtz equation, \cite{GS24}.
\section{Guaranteeing quasi-optimality via mesh refinement}
\label{sec:gmr}
\noindent
In this section, we present a simple algorithm that, for a given coarse mesh $\mathcal{T}_h^{(0)}$ and a wave number $k^2$, constructs a mesh $\mathcal{T}_h$ such that the finite element approximation of the Helmholtz problem is quasi-optimal.
\begin{algorithm}[h]
\caption{Guaranteed mesh refinement}\label{alg:gmr}
\begin{algorithmic}
\Require A triangulation $\mathcal{T}_h^{(0)}$ of $\Omega$, a FEM degree $p$, a wave number $k$
\Ensure $k^2 \leq \lambda_h^{(i_{\ast})}$ and $k^2 \geq \lambda_h^{(i_{\ast}+1)}$
\State $\mathcal{T}_h \gets \text{REFINE}(\mathcal{T}_h^{(0)})$
\State $\lambda_h^{(i_\ast)} \gets \text{ESTIMATE}(\mathcal{T}_h,p)$
\While{$k^2 \leq \lambda_h^{(i_\ast)}$ and $k^2 \geq \lambda_h^{(i_\ast+1)}$}
\State $\mathcal{T}_h \gets \text{REFINE}(\mathcal{T}_h)$
\State $\lambda_h^{(i_\ast)} \gets \text{ESTIMATE}(\mathcal{T}_h,p)$
\EndWhile
\end{algorithmic}
\end{algorithm}
The algorithm is based on the condition $\lambda_h^{(i_{\ast})} < k^2 < \lambda_h^{(i_\ast+1)}$ derived in Thm. \ref{thm:mainresult}.
It is clear from Algorithm \ref{alg:gmr} that the key ingredients of the scheme here proposed are the REFINE and ESTIMATE steps, here discussed in greater details.\\[0.25cm]
\noindent
\emph{REFINE}, The refinement strategy is a key ingredient in the proposed mesh refinement strategy. The goal is to refine the mesh until the condition $\lambda_h^{(i_{\ast})} < k^2 < \lambda_h^{(i_\ast+1)}$ is satisfied.
The simplest refinement strategy is the uniform refinement, where each element is divided into four equal sub-elements. This strategy is simple to implement and guarantees that the condition $\lambda_h^{(i_{\ast})} < k^2 < \lambda_h^{(i_\ast+1)}$ will be satisfied for sufficiently fine meshes.
Uniform mesh refinement is explored in Examples \ref{ex:tuningfork} and \ref{ex:scatterer}.
However, uniform refinement is not optimal and can lead to a large number of elements.
Alternatively, one can use an adaptive mesh refinement strategy. In this paper we investigate a naive adaptive refinement strategy based on the error estimator $\eta$ defined as 
\begin{equation}
    \label{eq:eta}
    \eta = i_{\ast}^{-1}\sum_{i=1}^{i_\ast+\ell} \! \sum_{K \in \mathcal{T}_h} \! \! \left(h^2_K \norm{\Delta e_h^{(i)}+\lambda_h^{(i)} e_h^{(i)}}_{L^2(K)}^2 + \frac{h_K}{2}\norm{\nabla e_h^{(i)}\cdot \bm{n}}_{L^2(\partial K\backslash \partial\Omega)}^2\right),
\end{equation}
where $\ell$ is a small integer, $e_h^{(i)}$ and $\lambda_h^{(i)}$ are the $i$-th eigenfunction and eigenvalue, $h_K$ is the diameter of the element $K$, and $\bm{n}$ is the outward unit normal to $\partial K$.
This error estimator is essentially \textit{Babu\v{s}ka–Rheinboldt estimator}, averaged over the first $i_{\ast}+\ell$ eigenfunctions \cite{BR78}.
More sophisticated adaptive mesh refinement strategies can be used, we refer the reader interested in this topic to \cite{N09}.
We explore the naive adaptive refinement strategy based on the error estimator introduced in \eqref{eq:eta} in Example \ref{ex:plane_babuska}.
\\[0.25cm]
\noindent
\emph{ESTIMATE},  The estimate of the eigenvalue $\lambda_h^{(i_{\ast})}$ is critical, in fact while checking the condition $\lambda_h^{(i_{\ast})} < k^2 < \lambda_h^{(i_{\ast}+1)}$ is simple, at the same time we need to compute the correct index $i_{\ast}$.
The index $i_{\ast}$ might be already known since a modal analysis has been conducted on the domain to ensure that the wave-number $k$ is not a resonance frequency.

If we are using a Crouzeix-Raviart discretisation another option is to estimate the index $i_{\ast}$ using the guaranteed lower bounds for the eigenvalues $\underline{\lambda}_h^{(j)}$.
For a fixed a maximum index $j_{\max}$, on the coarsest mesh $\mathcal{T}_h^{(0)}$ we compute the first $j_{\max}$ eigenvalues.
We then compute the guaranteed lower bounds $\underline{\lambda}_h^{(j)}$ for $j=1,\ldots,j_{\max}$ and fix as guess for the index $i_{\ast}$ the largest $j$ such that $\underline{\lambda}_h^{(j+1)}\geq k^2$. We denote such index as $j_{\ast}$.
We can then proceed to adaptively refine the mesh using as indicator the error estimator $\eta$ defined similarly to \eqref{eq:eta} but with $i_{\ast}$ replaced by $j_{\ast}$.
Each time we refine the mesh we can update the index $j_{\ast}$ by computing the guaranteed lower bounds for the eigenvalues on the new mesh and fixing as new index $j_{\ast}$ the largest $j$ such that $\underline{\lambda}_h^{(j+1)}\geq k^2$.
Using the guaranteed lower bounds for the eigenvalues we can ensure that $i_{\ast}$ is correctly estimated. In fact if $k^2-\lambda_h^{(j_{\ast})}$ is less than $\overline{\lambda}_h^{(j_{\ast})}-\underline{\lambda}_h^{(j_{\ast})}$ we know the index $j_{\ast}$ can not change anymore, hence we have correctly estimated $i_{\ast}$.
\\[0.4cm]

\begin{remark}
    While we here focused on the use of the guaranteed lower bounds only for Crouzeix-Raviart discretisations, the same idea can be applied to $H^1$-conforming discretisations using the guaranteed lower bounds presented in \cite{CDM17}.
\end{remark}

\begin{ex}[The tuning fork]
\label{ex:tuningfork}
We here consider a more challenging geometry: the tuning fork domain.
In Figure \ref{fig:tuningforkDirichlet} we show the results of the numerical experiments for the Helmholtz problem with $k^2=100$ and  Dirichlet boundary conditions.
In particular, we consider as data the Gaussian bump 
\begin{equation}
    \label{eq:GB}
    f(x,y) = 5 \cdot 10^4 \exp \left[{-(40^2)\cdot ((x-\mathcal{O}_x)^2+(y-\mathcal{O}_y)^2)}\right], \quad \mathcal{O}=(10,7/4).
\end{equation}
We solved the Helmholtz problem, using conforming a $\mathcal{P}^1$ conforming finite element method on a sequence of uniformly refined meshes until $k^2-\lambda^{(i_\ast)}_h$ was positive.
We then solved the problem on one additional uniform refinement for comparison.
The value of $k^2-\lambda^{(i_\ast)}_h$ for the different meshes is shown in Table \ref{tab:tuningforkDirichlet}.
Before the condition $k^2-\lambda^{(i_\ast)}_h>0$ was satisfied we observe in Figure \ref{fig:tuningforkDirichlet} the presence of a spurious symmetric eigen mode in poorly approximated Helmholtz solutions.
This results in the solution of the Helmholtz problem being symmetric with respect to the $x$-axis, for poorly approximated solutions.
\begin{figure}
\centering 
    \begin{subfigure}{0.45\linewidth}
        \includegraphics[width=\linewidth]{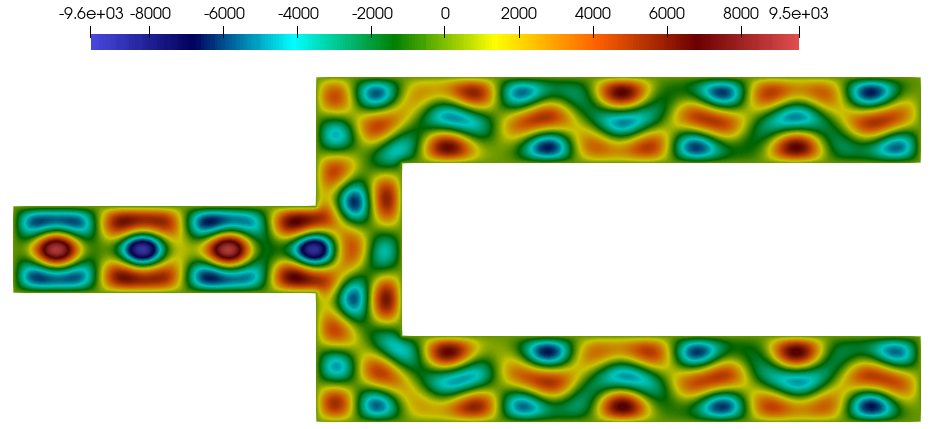} 
        \subcaption{$Number of DoFs:  21521$.}
    \end{subfigure}\hfill
    \begin{subfigure}{0.45\linewidth}
        \includegraphics[width=\linewidth]{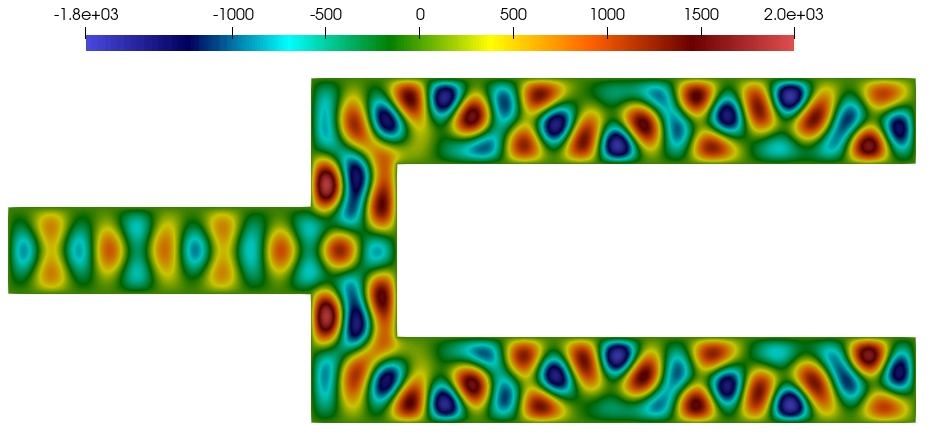} 
        \subcaption{$Number of DoFs:  84769$.}
    \end{subfigure}
    \begin{subfigure}{0.45\linewidth}
        \includegraphics[width=\linewidth]{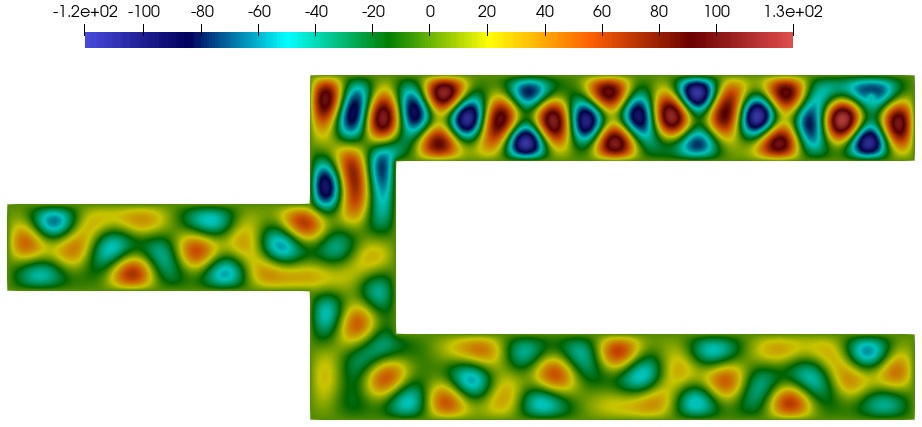} 
        \subcaption{$Number of DoFs:  336449$.}
    \end{subfigure}\hfill
    \begin{subfigure}{0.45\linewidth}
        \includegraphics[width=\linewidth]{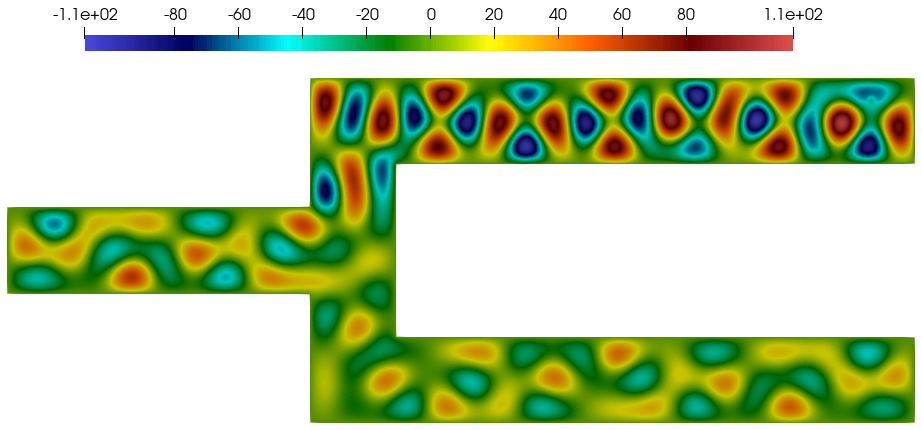} 
        \subcaption{$Number of DoFs:  1340545$.}
    \end{subfigure}
\caption{We solve the Helmholtz problem on a tuning fork domain using a sequence of uniformly refined meshes, with $k^2$ equal to $100$ and data a Gaussian bump $f(x,y) = 5\times 10^4 \exp \left[{-(40^2)\cdot ((x-10)^2+(y-\frac{7}{4})^2)}\right]$.
        The criterion here proposed suggest that mesh $(c)$ is the first one to guarantee quasi-optimality.}
\label{fig:tuningforkDirichlet}
\end{figure}
\begin{table}[!htbp]
    \caption{The table shows $k^2-\lambda^{(i_\ast)}_h$ for different degrees of freedom corresponding to meshes obtained by uniform refinement, in the setting of Example \ref{ex:tuningfork}.}
    \label{tab:tuningforkDirichlet}
    \pgfplotstableread[
    col sep=comma,
    ]{results_ding.csv}\normal

    \pgfplotstabletranspose[
        colnames from=ndof,
        columns={ndof, condition},
    ]\transpose\normal

    \begin{center}
        \pgfplotstabletypeset[
            string type,
            every head row/.style={before row=\toprule, after row=\midrule},
            every last row/.style={after row=\bottomrule},
            every col no 0/.style={
                column type={l},
                column name={N. DoFs},
                postprddoc cell content/.append style={/pgfplots/table/@cell content/.add={}{}},
            },
            every row 0 column 0/.style={postproc cell content/.style={@cell
                content=$k^2 - \lambda^{(i_{\ast})}$}},
        ]\transpose
    \end{center}
\end{table}
\end{ex}
\begin{ex}[Scatterer]
    \label{ex:scatterer}
    We consider the Helmholtz problem on a domain with non-trivial topology, represented by a scatterer.
    We solve the Helmholtz problem with $k^2=1600$ and homogeneous Neumann boundary conditions, on the boundary of the outer circular domain. 
    We used a $\mathcal{P}^1$ conforming finite element method.
    The data is given by the Gaussian bump similar to \eqref{eq:GB} and centered at $\mathcal{O}=(-\frac{1}{2}, -\frac{1}{2})$.
    We kept refining the mesh uniformly until $k^2-\lambda^{(i_\ast)}_h$ was positive.
    The value of $k^2-\lambda^{(i_\ast)}_h$ for the different meshes is shown in Table \ref{tab:tuningScatterer}.
    Before the condition $k^2-\lambda^{(i_\ast)}_h>0$ was satisfied we clearly see inspecting Figure \eqref{fig:tuningScatterer} that we have non-symmetric solution, while after the condition is satisfied we have a symmetric solution as expected due to the symmetry of \eqref{eq:GB} in the scatterer domain.
    \begin{figure}
\centering
    \begin{subfigure}{0.45\linewidth}
        \includegraphics[width=\linewidth]{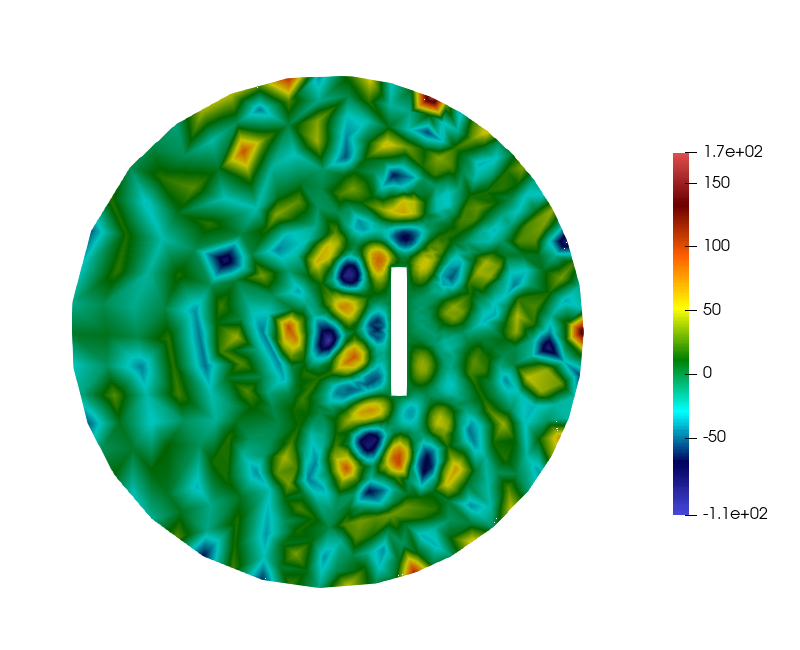} 
        \subcaption{$Number of DoFs:  1160$.}
    \end{subfigure}\hfill
    \begin{subfigure}{0.45\linewidth}
        \includegraphics[width=\linewidth]{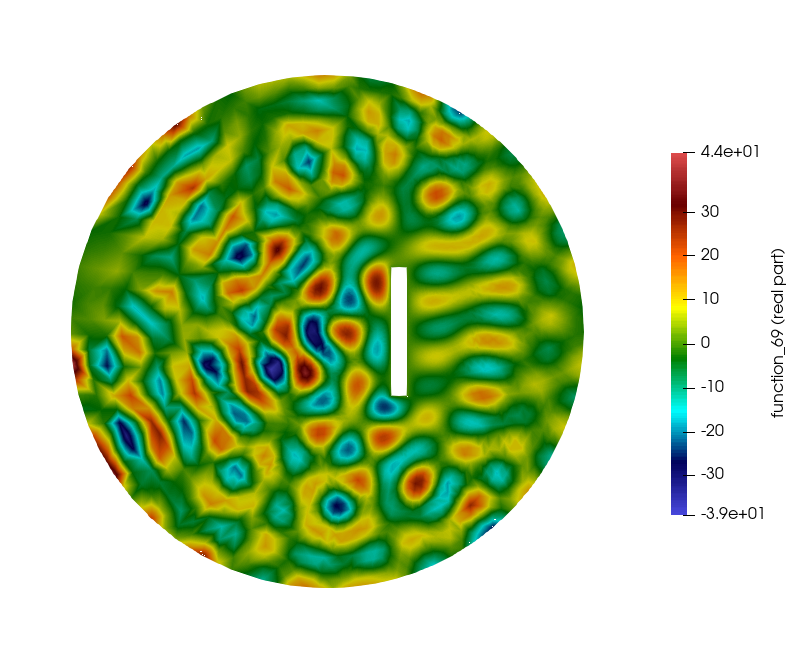} 
        \subcaption{$Number of DoFs:  4528$.}
    \end{subfigure}
    \begin{subfigure}{0.45\linewidth}
        \includegraphics[width=\linewidth]{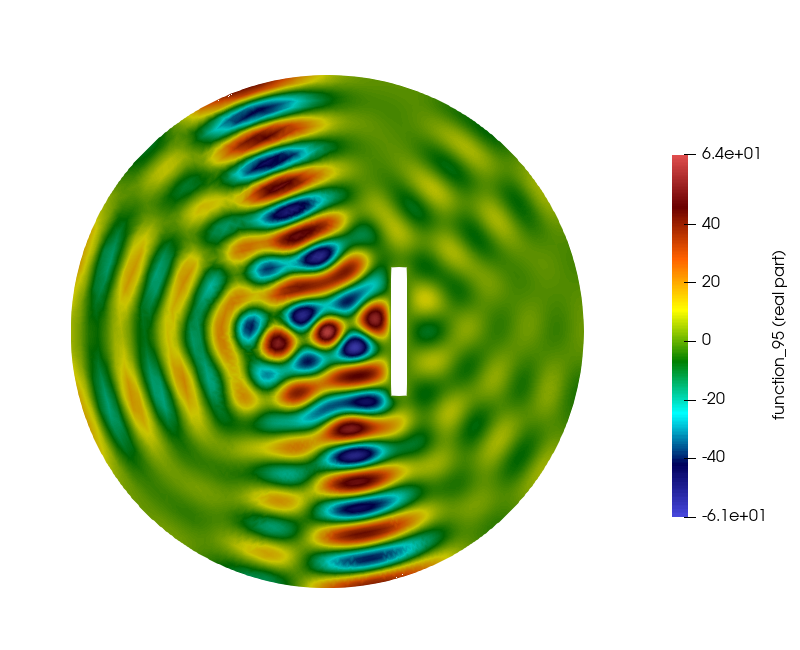} 
        \subcaption{$Number of DoFs: 17888$.}
    \end{subfigure}\hfill
    \begin{subfigure}{0.45\linewidth}
        \includegraphics[width=\linewidth]{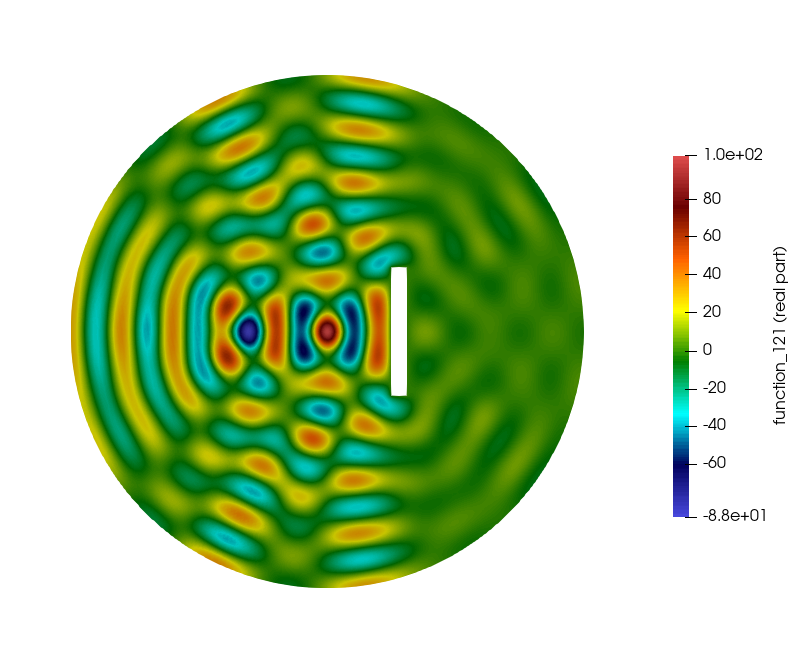} 
        \subcaption{$Number of DoFs:  71104$.}
    \end{subfigure}
\caption{We solve the Helmholtz problem on a tuning fork domain using a sequence of uniformly refined meshes, with $k$ equal to $10$ and data a Gaussian bump $f(x,y) = 5\times 10^4 \exp \left[{-(40^2)\cdot ((x+\frac{1}{2})^2+(y+\frac{1}{2})^2)}\right]$.
        The criterion here proposed suggest that mesh $(c)$ is the first one to guarantee quasi-optimality.}
\label{fig:tuningScatterer}
\end{figure}
\begin{table}[!htbp]
    \caption{The table shows $k^2-\lambda^{(i_\ast)}_h$ for different degrees of freedom corresponding to meshes obtained by uniform refinement, in the setting of Example \ref{ex:scatterer}.}
    \label{tab:tuningScatterer}
    \pgfplotstableread[
    col sep=comma,
    ]{results_scatterer.csv}\normal

    \pgfplotstabletranspose[
        colnames from=ndof,
        columns={ndof, condition},
    ]\transpose\normal

    \begin{center}
        \pgfplotstabletypeset[
            string type,
            every head row/.style={before row=\toprule, after row=\midrule},
            every last row/.style={after row=\bottomrule},
            every col no 0/.style={
                column type={l},
                column name={N. DoFs},
                postprddoc cell content/.append style={/pgfplots/table/@cell content/.add={}{}},
            },
            every row 0 column 0/.style={postproc cell content/.style={@cell
                content=$k^2 - \lambda^{(i_{\ast})}$}},
        ]\transpose
    \end{center}
\end{table}
\end{ex}
\begin{ex}(Adaptivity)
    \label{ex:plane_babuska}
    Once again we consider the Helmholtz problem on a domain with non-trivial topology.
    We solve the Helmholtz problem with $k^2=400$ and homogeneous Dirichlet boundary conditions.
    We begin considering a $\mathcal{P}^1$ conforming finite element method and consider as data the Gaussian bump \eqref{eq:GB} centered at $\mathcal{O}=(\frac{3}{4},-\frac{3}{4})$.
    We kept refining the mesh until $k^2-\lambda^{(i_\ast)}_h$ was positive. To compute the index $i_{\ast}$ we used a reference solution computed with a higher order finite element method on a much finer mesh.
    In particular, we used the error estimator $\eta$ defined in \eqref{eq:eta} to mark for refinement elements whose value $\eta$ was above half the maximum value of $\eta$.
    The value of $k^2-\lambda^{(i_\ast)}_h$ for the different meshes is shown in Table \ref{tab:adptCG}.
    \begin{table}[!htbp]
        \caption{The table shows $k^2-\lambda^{(i_\ast)}_h$ for different degrees of freedom corresponding to meshes obtained by adaptive refinement, based on the indicator defined in \ref{eq:eta}, in the setting of Example \ref{ex:plane_babuska}.
        A conforming $\mathcal{P}^1$-finite element method is used to solve the Helmholtz problem.}
        \label{tab:adptCG}
        \pgfplotstableread[
        col sep=comma,
        ]{results_plane_cg.csv}\normal

        \pgfplotstabletranspose[
            colnames from=ndof,
            columns={ndof, condition},
        ]\transpose\normal

        \begin{center}
            \pgfplotstabletypeset[
                string type,
                every head row/.style={before row=\toprule, after row=\midrule},
                every last row/.style={after row=\bottomrule},
                every col no 0/.style={
                    column type={l},
                    column name={N. DoFs},
                    postprddoc cell content/.append style={/pgfplots/table/@cell content/.add={}{}},
                },
                every row 0 column 0/.style={postproc cell content/.style={@cell
                content=$k^2 - \lambda^{(i_{\ast})}$}},
            ]\transpose
        \end{center}
    \end{table}
    We then solve the same problem using the Crouzeix-Raviart finite element method.
    Once again we used the error estimator $\eta$ defined in \eqref{eq:eta} to mark for refinement elements whose value $\eta$ was above half the maximum value of $\eta$.
    Using the estimate discussed in Section \ref{sec:gmr} we the index $i_{\ast}$ was estimated by $j_{\ast}$.
    We kept refining the mesh until $k^2-\lambda^{(j_\ast)}_h$ was positive and we can certify that the index $j_{\ast}$ is the same as $i_{\ast}$, via the granted lower bound estimates discussed in Section \ref{sec:gmr}.
    \begin{table}[!htbp]
        \caption{The table shows $k^2-\lambda^{(j_\ast)}_h$ and $j_{\ast}$ for different degrees of freedom corresponding to meshes obtained by adaptive refinement, based on the indicator defined in \ref{eq:eta}, in the setting of Example \ref{ex:plane_babuska}.
        A Crouzeix-Raviart finite element method is used to solve the Helmholtz problem.
        We also report the difference between the upper and lower estimates of the eigenvalue $\lambda_h^{(j_{\ast})}$ and the estimated index $j_{\ast}$.
        While the index $j_{\ast}$ is correctly estimated on coarser meshes and the cirterion $k^2-\lambda^{(j_{\ast})}_h>0$ is satisfied on the same mesh, only on a much finer mesh we are able to certify that the index $j_{\ast}$ is correctly estimated, since $\overline{\lambda}_h^{(j_{\ast})}-\underline{\lambda}_h^{(j_{\ast})}$ is smaller than $k^2-\lambda^{(j_{\ast})}_h\approx 3.88$.}
        \label{tab:adptCR}
        \pgfplotstableread[
        col sep=comma,
        ]{results_plane_cr1.csv}\normal

        \pgfplotstabletranspose[
            colnames from=ndof,
            columns={ndof, condition,eta, estIndex},
        ]\transpose\normal

        \begin{center}
            \pgfplotstabletypeset[
                string type,
                every head row/.style={before row=\toprule, after row=\midrule},
                every last row/.style={after row=\bottomrule},
                every col no 0/.style={
                    column type={l},
                    column name={N. DoFs},
                    postprddoc cell content/.append style={/pgfplots/table/@cell content/.add={}{}},
                },
                every row 0 column 0/.style={postproc cell content/.style={@cell
                content=$k^2 - \lambda^{(j_{\ast})}_h$}},
                every row 1 column 0/.style={postproc cell content/.style={@cell
                content=$\overline{\lambda}_h^{(j_\ast)}-\underline{\lambda}_h^{(j_\ast)}$}},
                every row 2 column 0/.style={postproc cell content/.style={@cell
                content=$j_\ast$}},
            ]\transpose
        \end{center}
        \pgfplotstableread[
        col sep=comma,
        ]{results_plane_cr2.csv}\normal

        \pgfplotstabletranspose[
            colnames from=ndof,
            columns={ndof, condition, eta, estIndex},
        ]\transpose\normal

        \begin{center}
            \pgfplotstabletypeset[
                string type,
                every head row/.style={before row=\toprule, after row=\midrule},
                every last row/.style={after row=\bottomrule},
                every col no 0/.style={
                    column type={l},
                    column name={N. DoFs},
                    postprddoc cell content/.append style={/pgfplots/table/@cell content/.add={}{}},
                },
                every row 0 column 0/.style={postproc cell content/.style={@cell
                content=$k^2 - \lambda^{(j_{\ast})}_h$}},
                every row 1 column 0/.style={postproc cell content/.style={@cell
                content=$\overline{\lambda}_h^{(j_\ast)}-\underline{\lambda}_h^{(j_\ast)}$}},
                every row 2 column 0/.style={postproc cell content/.style={@cell
                content=$j_\ast$}},
            ]\transpose
        \end{center}
    \end{table}
    As we can see from Figure \ref{fig:plane_adpt_mesh} the mesh obtained by adaptive refinement for both the $\mathcal{P}^1$-conforming finite element method and the Crouzeix-Raviart finite element method is much more refined near the corner of the inner domain, as expected.
    \begin{figure}[h]
        \centering 
        \includegraphics[scale=0.12]{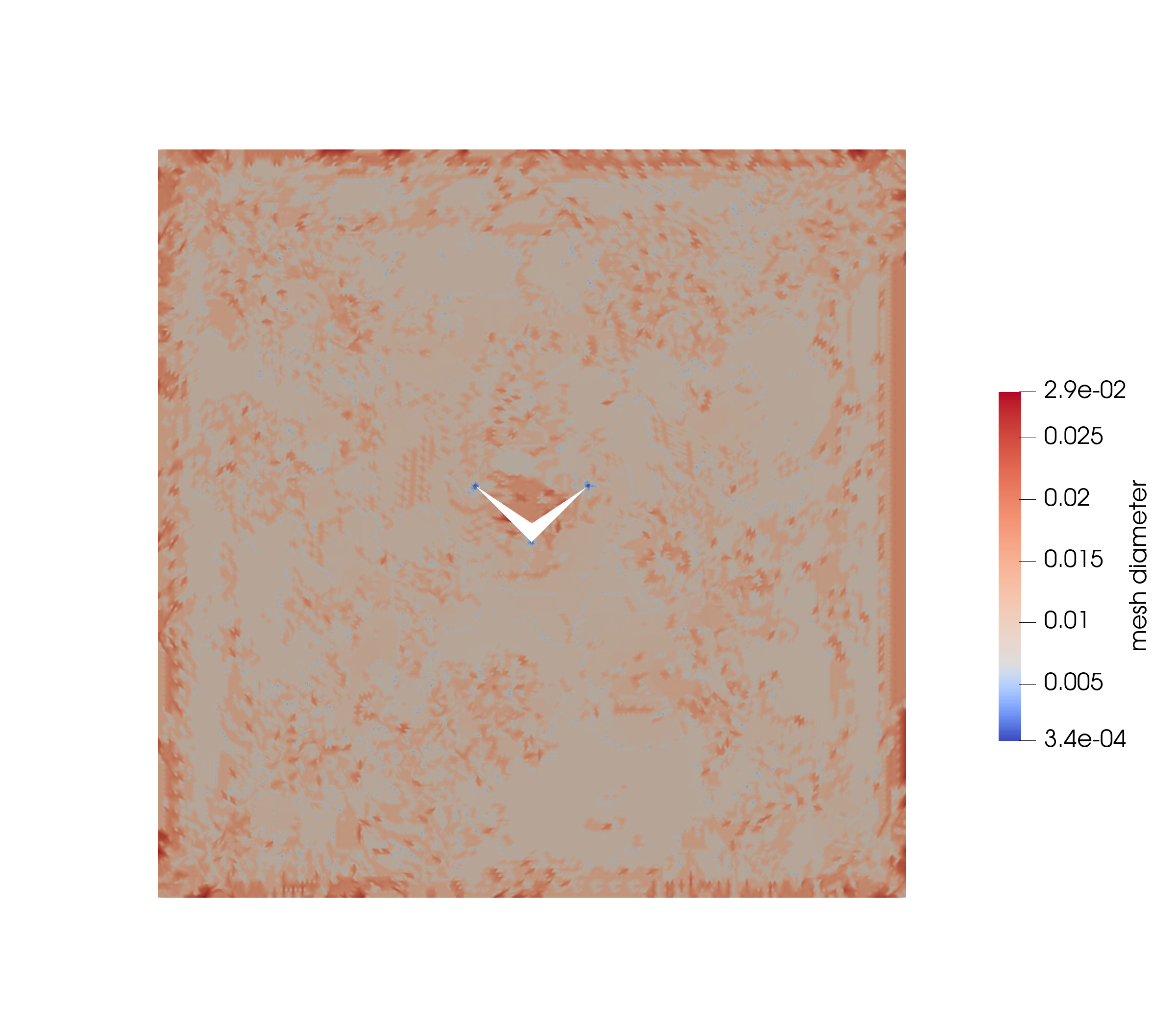}  
        \includegraphics[scale=0.12]{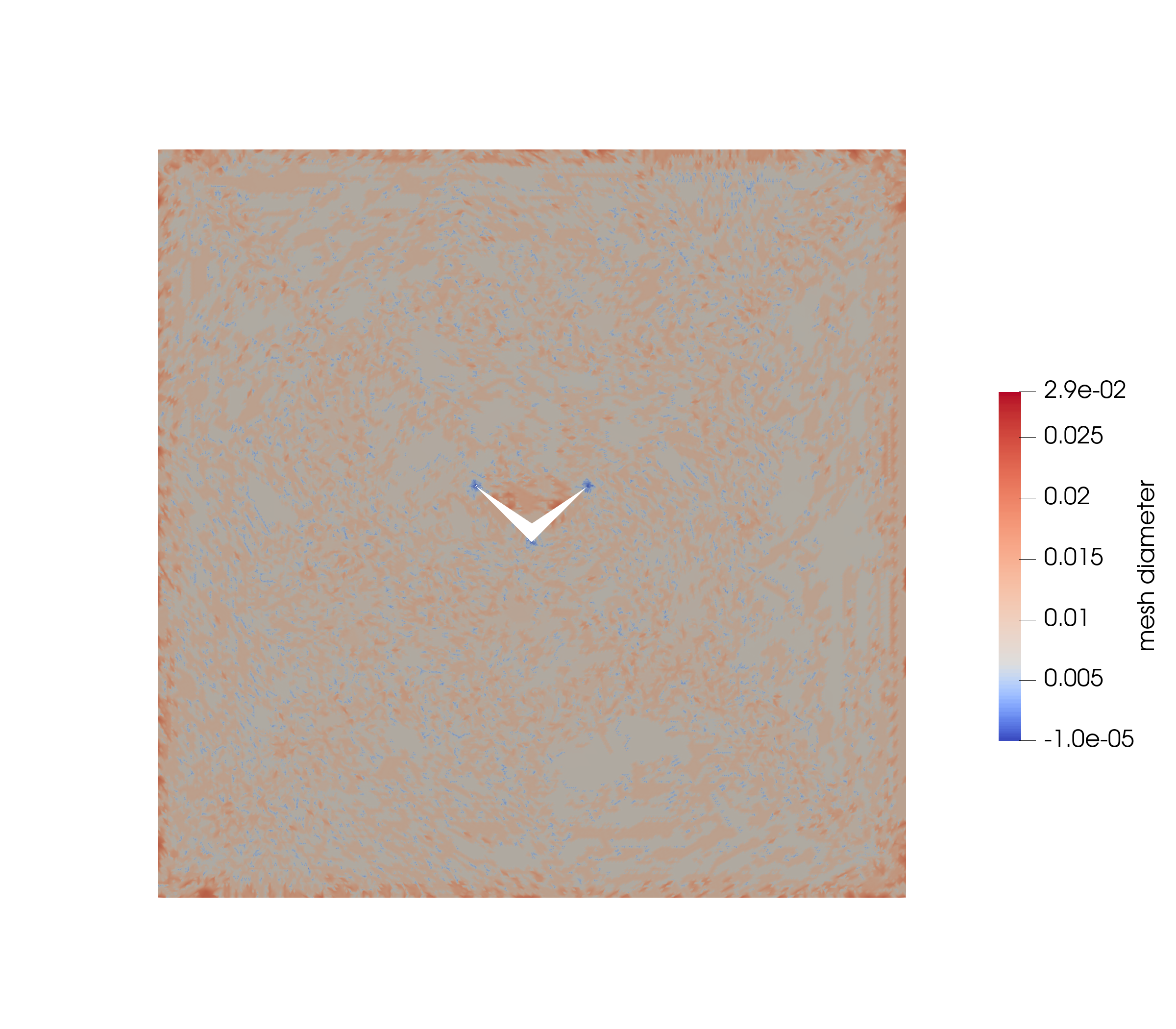}
        \caption{On the left we show the element diameter of the mesh obtained by adaptive refinement for the $\mathcal{P}^1$-conforming finite element method, while on the right we show the same quantity for the mesh obtained by adaptive refinement for the Crouzeix-Raviart finite element method.}
        \label{fig:plane_adpt_mesh}
    \end{figure}
\end{ex}

\bibliographystyle{elsarticle-num}
\bibliography{main.bib}

\end{document}